\newcommand{\mcg}{\mathrm{MCG}}
\newcommand{\card}[1]{\left| #1\right|}
\newcommand{\aut}[1]{\mathrm{Aut}\left( #1\right)}
\newcommand{\sym}[1]{\mathfrak{S}_{#1}}
\newcommand{\baut}[1]{\mathrm{Aut}\left(\partial #1\right)}
\newcommand{\cg}{\mathrm{RC}}
\newtheorem{theor}{Theorem}
\newtheorem{cor}[theor]{Corollary}
\newtheorem{lemma}[theor]{Lemma}
\newtheorem{prp}[theor]{Proposition}
\theoremstyle{definition}
\newenvironment{thmrep}[1]
  {\innerthmrep}
  {\endinnerthmrep}
\newenvironment{correp}[1]
  {\innercorrep}
  {\endinnercorrep}
\title[Mapping class group orbits of curves with self-intersections]{Mapping class group orbits of curves\\with self-intersections}
\author{Patricia Cahn, Federica Fanoni and Bram Petri}
\address{\newline\noindent 
Patricia Cahn\newline
Max Planck Institute for Mathematics, Bonn, Germany\newline
pcahn@mpim-bonn.mpg.de \newline
\newline
Federica Fanoni\newline 
Mathematics Institute, University of Warwick, Coventry, UK\newline
federica.fanoni@gmail.com \newline
\newline
Bram Petri\newline 
Max Planck Institute for Mathematics, Bonn, Germany\newline
brampetri@mpim-bonn.mpg.de}
\date{\today}
\begin{document}

\begin{abstract} We study mapping class group orbits of homotopy and isotopy classes of curves with self-intersections. We exhibit the asymptotics of the number of such orbits of curves with a bounded number of self-intersections, as the complexity of the surface tends to infinity.

We also consider the minimal genus of a subsurface that contains the curve. We determine the asymptotic number of orbits of curves with a fixed minimal genus and a bounded self-intersection number, as the complexity of the surface tends to infinity. 

As a corollary of our methods, we obtain that most curves that are homotopic are also isotopic. Furthermore, using a theorem by Basmajian, we get a bound on the number of mapping class group orbits on a given a hyperbolic surface that can contain short curves. For a fixed length, this bound is polynomial in the signature of the surface.

The arguments we use are based on counting embeddings of ribbon graphs. 
\end{abstract}

\maketitle

\section{Introduction}

Recently, there has been a lot of progress on counting curves on surfaces. There are essentially two questions to answer, a geometric and a topological one. The topological question asks how many curves there are with given topological properties. The geometric question asks how many closed geodesics, possibly with certain topological properties, there are up to a certain length. We start with a brief and incomplete overview of the work that has been done on these questions. For simplicity, we will for now restrict to closed surfaces.

The classical geometric result is by Huber \cite{Huber} and Margulis \cite{Margulis} and states that given a negatively curved, complete and finite volume metric on a closed surface $\Sigma_g$ of genus $g\geq 2$, the number $G(L)$ of closed geodesics up to length $L>0$ satisfies
$$
G(L) \sim \frac{e^{\delta L}}{\delta L} \;\;\text{as}\;\; L\to\infty
$$
where $\delta$ is the topological entropy of the geodesic flow and the symbol `$\sim$' means that the ratio of the two quantities tends to $1$ as $L\to\infty$. If the metric is hyperbolic (constant curvature $-1$) then $\delta=1$, which is the case that Huber considered. Note that on a surface with a negatively curved metric, closed geodesics naturally correspond to free homotopy classes of non-contractible and non-peripheral (i.e. not homotopic to a single puncture) curves (see for instance \cite[Prop.\ 1.3]{FarbMargalit} or \cite[Theorem 3.8.14]{Klingenberg}), which means that counting closed geodesics is the same as counting free homotopy classes.

In \cite{Mirzakhani1}, Mirzakhani showed that the number $S(L)$ of {\it simple} closed geodesics (closed geodesics with no self-intersections) up to length $L>0$ on a closed hyperbolic surface $X$ of genus $g$ satisfies
$$
S(L) \sim b_X L^{6g-6}\;\;\text{as}\;\; L\to\infty
$$
where $b_X$ is a continuous and proper function on the moduli space of hyperbolic structures on $\Sigma_g$. Earlier results were obtained by Rees \cite{Rees}, McShane and Rivin \cite{McShaneRivin} and Rivin \cite{Rivin1}. Part of the proof of this result relies on dividing the geodesics in to mapping class group (denoted $\mcg(\Sigma_g)$) orbits and then counting the number of curves in a fixed orbit. 

Even more recently the asymptotics of the number of closed geodesics with a bounded number of self-intersections in a given $\mcg(\Sigma_g)$-orbit up to a given length have been shown to behave similarly by Rivin in \cite{Rivin2} (for one self-intersection), Erlandsson and Souto \cite{ErlandssonSouto} and Mirzakhani \cite{Mirzakhani2}.

The topological question asks how many $\mcg(\Sigma_g)$-orbits, or {\it topological types}, there are of (isotopy or homotopy) classes of curves (or sets of curves) with certain properties. Note that in some cases this count is also necessary to complete the geometric picture.

It is not hard to see that there are infinitely many $\mcg(\Sigma_g)$-orbits of curves on $\Sigma_g$. So we need to consider smaller sets of curves. The classical topological result (see for instance \cite[Section 1.3.1]{FarbMargalit}) is that the number $N_{g}(0)$ of $\mcg(\Sigma_g)$-orbits of homotopy classes of simple curves is equal to
$$
N_{g}(0) = \left\lfloor\frac{g}{2}\right\rfloor +1
$$
where $\lfloor x\rfloor$ denotes the floor of a real number $x$ .

One can similarly count $N_{g}(\leq k)$: the number of $\mcg(\Sigma_g)$-orbits of homotopy classes of curves with at most $k$ self-intersections. This question is considerably more difficult than the simple case. For one thing, in this count it actually matters whether one considers isotopy or homotopy classes. This is because, unlike the case of simple curves, these are no longer the same for curves with self-intersections. In order to complete the count of the number of geodesics on a hyperbolic surface with a bounded number of self-intersections up to a bounded length, Sapir \cite{Sapir} considered the asymptotics of $N_{g}(\leq k)$ for $g$ fixed and $k\to\infty$. She proved that
$$
\frac{1}{12}2^{\sqrt{\frac{k}{12}}}\leq N_{g}(\leq k) \leq e^{d_g\sqrt{k}\log(d_g\sqrt{k})}
$$
where $d_g$ is a constant depending only on the genus. Sapir used these results to answer questions on the number of $\mcg(\Sigma_g)$-orbits that contain short curves. Concretely, let $L>0$, $X$ be a hyperbolic surface and $N_X(k,L)$ denote the number of $\mcg(\Sigma_g)$-orbits of curves with $k$ self-intersections the contain a curve of length at most $L$. She proves
$$
\frac{1}{12}\min\left\{2^{\frac{1}{8l_X}},2^{\sqrt{\frac{k}{12}}}\right\} \leq N_X(k,L) \leq \min\left\{e^{d_g\sqrt{k}\log(d_X\frac{L}{\sqrt{k}}+d_X)}, e^{d_g\sqrt{k}\log(d_g\sqrt{k})} \right\}
$$
where $l_X$ and $d_X$ are constants depending only on $X$.

Another question of a similar flavor is about complete $1$-systems, i.e. collections of isotopy classes of simple curves that pairwise intersect exactly once. In \cite{MalesteinRivinTheran}, Malestein, Rivin and Theran raised the question of how many $\mcg(\Sigma_g)$-orbits there are of such systems and showed that there is only one such orbit when $g=1,2$. Aougab \cite{Aougab} and subsequently Aougab and Gaster \cite{AougabGaster} showed that this does not persist in the higher genus case by constructing many such orbits.

The question we ask is complementary to the one considered by Sapir. Instead of asking how many curves there are with a large number of self-intersections on a fixed surface, we ask how many curves there are with a fixed number of self-intersections on a surface of large genus or with a large number of punctures. Besides the number of self-intersections, we also order our orbits of curves by the minimal genus of a subsurface that contains them.

We have already noted that it makes a difference whether one asks for homotopy classes or isotopy classes of curves. Let us start with homotopy classes.

Concretely, let $N_{g,n}(k,h)$ denote the number of $\mcg(\Sigma_{g,n})$-orbits of free homotopy classes of curves on $\Sigma_{g,n}$, a surface of signature $(g,n)$, that have $k$ self-intersections and minimal genus of a subsurface containing them equal to $h$ . We prove:
\begin{theor}\label{maintheorem} Let $k,h\in\mathbb{N}$. Furthermore, let $\{g_i,n_i\}_{i\in\mathbb{N}}\subset \mathbb{N}$ be a sequence such that $g_i+n_i\to\infty$ as $i\to\infty$. Then
$$
 N_{g_i,n_i}(k,h) \sim C_{k,h} {g_i+k-3h+1 \choose k+1-2h} {n_i+k+1-2h \choose k+1-2h} 
$$
as $i\to\infty$. Here, $C_{k,h}= \sum\limits_{\Gamma \in \cg_h(k)} \frac{1}{\card{\baut{\Gamma}}}$ is a constant depending only on $k$ and $h$. The sum in $C_{k,h}$ is taken over certain ribbon graphs (see Section \ref{sec_asymptotics}).
\end{theor}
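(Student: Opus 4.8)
The plan is to translate the problem into a count of embeddings of ribbon graphs and then run an elementary lattice-point estimate. To a generic closed curve $\gamma$ on $\Sigma_{g,n}$ with $k$ double points one associates a regular neighborhood $\mathcal{N}(\gamma)$, which deformation retracts onto $\gamma$; regarding $\gamma$ as a $4$-valent graph whose vertices are the double points, the embedding $\mathcal{N}(\gamma)\hookrightarrow\Sigma_{g,n}$ endows this graph with a ribbon structure $\Gamma=\Gamma(\gamma)$. Since the two branches of $\gamma$ meet transversally at each double point, the resolution of $\Gamma$ that joins opposite half-edges at every vertex recovers $\gamma$ as a single closed curve, and as the graph has $k$ vertices and $2k$ edges one gets $\chi(\mathcal{N}(\gamma))=-k$; hence $\mathcal{N}(\gamma)$ has genus $h$ (it realizes the minimal genus of a subsurface containing $\gamma$) and exactly $b:=k+2-2h$ boundary circles. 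The first step is to promote this to a bijection: one records the finite set $\cg_h(k)$ of ribbon graphs that occur (the $4$-valent connected ones with single-cycle resolution and genus $h$ that satisfy the intrinsic reducedness conditions forced by $\gamma$ being in minimal position), and shows that the $\mcg(\Sigma_{g,n})$-orbit of the isotopy class of $\gamma$ is determined by the pair consisting of $\Gamma(\gamma)$ and the \emph{filling data} — the homeomorphism types of the components of $\Sigma_{g,n}\setminus\mathcal{N}(\gamma)$ together with the pattern by which their boundary circles are glued to the $b$ boundary circles of $\Gamma$ — taken up to the natural action of $\baut{\Gamma}$. Since the ribbon graph is an isotopy invariant, distinct $\Gamma\in\cg_h(k)$ contribute disjoint families of orbits.

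Next, fix $\Gamma\in\cg_h(k)$ and count the admissible filling data. An Euler characteristic computation shows that if $\Sigma_{g,n}\setminus\mathcal{N}(\gamma)$ has $c$ components then these carry total genus $g+h-k-2+c$ and all $n$ punctures. The leading contribution comes from the maximally disconnected case $c=b$: then each boundary circle of $\Gamma$ is filled by its own complementary piece, a surface of genus $g_i\geq 0$ with one boundary circle and $n_i\geq 0$ punctures, the only global constraints being $\sum_i g_i=g-h$ and $\sum_i n_i=n$ (the reducedness conditions forbid only a bounded-codimension set of exceptional slot values, e.g.\ a disk in a bigon-creating position). The number of such genus vectors is $\binom{g-h+b-1}{b-1}$ and the number of puncture vectors is $\binom{n+b-1}{b-1}$, and since $b-1=k+1-2h$ and $g-h+b-1=g+k-3h+1$ these are exactly the two binomial coefficients in the statement.

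To conclude, pass to $\baut{\Gamma}$-orbits and control the remainder. By the orbit-counting lemma the number of orbits of filling data with $c=b$ is $\card{\baut{\Gamma}}^{-1}\sum_{\sigma\in\baut{\Gamma}}\card{\mathrm{Fix}(\sigma)}$; any nontrivial $\sigma$ identifies two distinct boundary circles of $\Gamma$, forcing the corresponding entries of both vectors to coincide, so $\card{\mathrm{Fix}(\sigma)}=O(g^{b-2}n^{b-2})$ for $\sigma\neq\mathrm{id}$, which is negligible against $\card{\mathrm{Fix}(\mathrm{id})}=\binom{g-h+b-1}{b-1}\binom{n+b-1}{b-1}$. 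Thus the contribution of $\Gamma$ is asymptotic to $\card{\baut{\Gamma}}^{-1}\binom{g-h+b-1}{b-1}\binom{n+b-1}{b-1}$, and summing over the finitely many $\Gamma\in\cg_h(k)$ produces the constant $C_{k,h}$. One then checks that the discarded configurations are of lower order: those with $c<b$ force the genus and the punctures into at most $b-1$ pieces, costing a factor of order $(g+n)^{-1}$; those containing a disk or once-punctured disk in a forbidden position form a family of strictly smaller codimension in the $(g_i)$- or $(n_i)$-lattice; and the passage from isotopy classes to free homotopy classes of curves alters the count by a lower-order amount — precisely the assertion that most homotopic curves are in fact isotopic, which is the corollary announced in the introduction.

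The step I expect to be the main obstacle is not the leading-order asymptotics but the setup: pinning down exactly which ribbon graphs lie in $\cg_h(k)$ and which filling data are admissible, so that the reducedness and non-peripherality requirements become exclusions of bounded codimension, and verifying that $\gamma\mapsto(\Gamma(\gamma),\text{filling data})$ really is a bijection on orbits in spite of the choices involved in replacing a curve by an embedded regular neighborhood. Making all the error bounds uniform along an arbitrary sequence with $g_i+n_i\to\infty$, in particular when one of $g_i$, $n_i$ remains bounded so that the other parameter carries all the growth, is the remaining point needing care.
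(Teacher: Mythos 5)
Your route is in substance the paper's own: associate to a minimal representative its $4$-valent ribbon graph and ribbon surface of genus $h$ with $b=k+2-2h$ boundary circles, classify mapping class group orbits of isotopy classes by the graph together with the complementary filling data (this is the paper's lemma on the triple $(\Gamma,P,S)$, proved by cutting along the ribbon surface and gluing homeomorphisms of the complementary pieces), count the dominant fillings in which each boundary circle bounds its own complementary piece as weak compositions of $g-h$ and of $n$ into $b$ parts, divide by $\card{\baut{\Gamma}}$, and discard fewer-component fillings, repeated-signature fillings and disk fillings as $O\left(((g+1)(n+1))^{b-2}\right)$. Your Burnside argument for the $\baut{\Gamma}$-quotient is only cosmetically different from the paper, which instead notes that fillings with pairwise distinct signatures are counted exactly $\card{\baut{\Gamma}}$ times and that fillings with a repeated signature are lower order; both give the same error term. (Do state the error as $O\left((g+1)^{b-2}(n+1)^{b-2}\right)$ rather than $O(g^{b-2}n^{b-2})$, so that it survives sequences along which $g_i$ or $n_i$ stays bounded --- the uniformity issue you flag yourself.)

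The one genuine gap is the passage from isotopy classes to free homotopy classes, which you assert changes the count only to lower order. The ribbon graph is \emph{not} a homotopy invariant: minimal representatives of the same free homotopy class may differ by third Reidemeister moves, which change $\Gamma$, so without further input the map from orbits of homotopy classes to (graph, filling data) is not even well defined, and the lower bound does not follow. Moreover you motivate this step by the statement that most homotopic curves are isotopic --- but that is Corollary \ref{cor_prob1}, a \emph{consequence} of this theorem's proof, so it cannot be used as input without circularity. The missing ingredients are the theorems of Hass--Scott and de Graaf--Schrijver used in the paper: a curve with excess self-intersection bounds a singular $1$- or $2$-gon, hence has a disk in its complement, and any two minimal representatives of a homotopy class differ by third Reidemeister moves and ambient isotopy. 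Since a Reidemeister III move requires a disk in the complement, every filling with no (unpunctured) disk produces a curve in minimal position whose homotopy class contains exactly one isotopy class and has a well-defined ribbon graph; this is what lets no-disk embeddings inject into orbits of homotopy classes (the paper's $N^{\circ}_{g,n}(\Gamma)$), while all isotopy embeddings give the upper bound, and your codimension count for disk slots then shows the two differ by $O\left(((g+1)(n+1))^{b-2}\right)$, exactly as in the paper's two propositions. With that input made explicit, your argument closes.
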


To our knowledge, there is no formula for $C_{k,h}$ that eliminates the dependence on ribbon graphs. On the other hand, similar quantities have been counted, often in terms of chord diagrams (see for instance \cite{Stoimenow}). These counts do give upper bounds for $C_{k,h}$, but they are not sharp. The problem here is the automorphism group that appears in the terms. We also note that in her very recent work \cite[Section 1.7]{Sapir}, Sapir also suggested using cut-and-paste techniques to count mapping class group orbits.  She suggests applying these techniques to small values of $k$ but does not work out the asymptotics.

Theorem \ref{maintheorem} can be used to determine the asymptotics of the number of all orbits of curves with $k$ self-intersections (so without restrictions on their minimal genus). Let us denote this number by $N_{g,n}(k)$. 

\begin{cor}\label{cor_nogenus}
 Let $k\in\mathbb{N}$. Furthermore, let $\{g_i,n_i\}_{i\in\mathbb{N}}\subset \mathbb{N}$ be a sequence such that $g_i+n_i\to\infty$ as $i\to\infty$. 
$$
 N_{g_i,n_i}(k) \sim C_{k}  {g_i+k+1 \choose k+1} {n_i+k+1 \choose k+1} 
$$
as $i\to\infty$. $C_k$ again is a sum over certain ribbon graphs (see Section \ref{sec_asymptotics}).
\end{cor}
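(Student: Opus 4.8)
The plan is to derive Corollary~\ref{cor_nogenus} from Theorem~\ref{maintheorem} by summing over the minimal genus $h$. The first point is that this sum is finite, of length bounded in terms of $k$ alone: a curve with $k$ self-intersections fills a subsurface that deformation retracts onto a four-valent ribbon graph with $k$ vertices and $2k$ edges, hence of Euler characteristic $-k$; since such a subsurface has at least one boundary component, its genus is at most $\lfloor (k+1)/2\rfloor$. Therefore $N_{g,n}(k,h)=0$ for $h>\lfloor(k+1)/2\rfloor$, and
\[
N_{g,n}(k)=\sum_{h=0}^{\lfloor(k+1)/2\rfloor}N_{g,n}(k,h),
\]
a sum of boundedly many terms, to each of which Theorem~\ref{maintheorem} applies.

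Next I would show that, along the sequence $\{(g_i,n_i)\}$, the $h=0$ term dominates all the others. Since $C_{k,0}=\sum_{\Gamma\in\cg_0(k)}\card{\baut{\Gamma}}^{-1}>0$ (a curve with $k$ transverse self-crossings can be drawn in a disk, so $\cg_0(k)\neq\emptyset$), Theorem~\ref{maintheorem} gives, for $i$ large, $N_{g_i,n_i}(k,0)\ge\tfrac12 C_{k,0}>0$ and simultaneously $N_{g_i,n_i}(k,h)\le 2C_{k,h}\binom{g_i+k-3h+1}{k+1-2h}\binom{n_i+k+1-2h}{k+1-2h}$ for every $h$ in the range above. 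Hence, for $h\ge1$,
\[
\frac{N_{g_i,n_i}(k,h)}{N_{g_i,n_i}(k,0)}\le\frac{4\,C_{k,h}}{C_{k,0}}\cdot\frac{\binom{g_i+k-3h+1}{k+1-2h}}{\binom{g_i+k+1}{k+1}}\cdot\frac{\binom{n_i+k+1-2h}{k+1-2h}}{\binom{n_i+k+1}{k+1}}.
\]
Using the elementary bounds $\binom{m+a}{b}\le(m+k+1)^{b}$ (valid for $0\le b$ and $a\le k+1$) and $\binom{m+k+1}{k+1}\ge\big((m+k+1)/(k+1)\big)^{k+1}$, each of the last two fractions is at most $(k+1)^{k+1}(m+k+1)^{-2h}$, with $m=g_i$ and $m=n_i$ respectively, so
\[
\frac{N_{g_i,n_i}(k,h)}{N_{g_i,n_i}(k,0)}\le\frac{4\,C_{k,h}\,(k+1)^{2k+2}}{C_{k,0}}\cdot\big((g_i+k+1)(n_i+k+1)\big)^{-2h}.
\]
Since $(g_i+k+1)(n_i+k+1)\ge g_i+n_i\to\infty$ and $h\ge1$, the right-hand side tends to $0$. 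Summing over the finitely many $h\ge1$ yields $N_{g_i,n_i}(k)=N_{g_i,n_i}(k,0)(1+o(1))$, and Theorem~\ref{maintheorem} with $h=0$ then gives the asserted asymptotic with $C_k=C_{k,0}$, the sum over the genus-$0$ ribbon graphs of Section~\ref{sec_asymptotics}.

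The one genuinely delicate point is the uniformity of these estimates over the sequence. Because we only assume $g_i+n_i\to\infty$, one of $g_i$, $n_i$ may stay bounded, so it does not suffice to compare the degrees of the governing polynomials in $g$ and in $n$ separately; one must instead observe, as above, that the error ratio for genus $h$ is bounded by a negative power of the \emph{product} $(g_i+k+1)(n_i+k+1)$, which diverges no matter how $g_i+n_i\to\infty$ is achieved. The remaining steps—the finiteness of the sum over $h$ and the binomial estimates—are routine bookkeeping.
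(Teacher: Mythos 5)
Your proof is correct and takes essentially the same route as the paper: the paper likewise writes $N_{g_i,n_i}(k)=\sum_{h=0}^{\lfloor (k+1)/2\rfloor}N_{g_i,n_i}(k,h)$ and invokes Theorem \ref{maintheorem} to conclude that only the $h=0$ term contributes asymptotically. Your explicit binomial estimates --- in particular bounding the genus-$h$ to genus-$0$ ratio by a negative power of the product $(g_i+k+1)(n_i+k+1)$, which handles sequences where one of $g_i,n_i$ stays bounded --- merely spell out details the paper leaves implicit (and the inline lower bound ``$N_{g_i,n_i}(k,0)\ge\tfrac12 C_{k,0}$'' should of course carry the binomial factors, as your subsequent ratio bound correctly assumes).
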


If we specialize even further and consider closed surfaces only, we obtain that
$$
N_g(k)\sim C_k \frac{g^{k+1}}{(k+1)!}
$$
as $g\to\infty$.

On the other hand, letting go of the restrictions on the number of self-intersections does not lead to interesting counts: the number of orbits of curves of a given minimal genus (without restrictions on the self-intersection number) is easily seen to be infinite.

It also follows from our arguments that, if $N^{iso}_{g,n}(k,h)$ denotes the number of free {\it isotopy} classes of essential closed curves with $k$ self-intersections and minimal genus $h$, we have:
$$
N^{iso}_{g,n}(k,h) \sim N_{g,n}(k,h)
$$
for $k,h$ fixed and $g+n\to\infty$. In other words, asymptotically it doesn't matter whether one counts orbits of istopy classes or homotopy classes. In particular, we have the follwing:
\begin{cor}\label{cor_prob1} Let $k\in\mathbb{N}$. Furthermore, let $\{g_i,n_i\}_{i\in\mathbb{N}}\subset \mathbb{N}$ be a sequence such that $g_i+n_i\to\infty$ as $i\to\infty$. Then as $i\to\infty$
$$\mathbb{P}_{g_i,n_i}\left[\substack{\displaystyle{\text{The homotopy class of a curve with }k\text{ self}} \\ \displaystyle{\text{intersections contains exactly one isotopy class}}} \right] \to 1$$
\end{cor}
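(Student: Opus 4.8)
The plan is to derive the corollary from the asymptotic equality $N^{iso}_{g_i,n_i}(k,h)\sim N_{g_i,n_i}(k,h)$ and from (the methods behind) Corollary~\ref{cor_nogenus}, both of which have already been announced. The first observation is that the property ``the homotopy class of a curve with $k$ self-intersections contains exactly one isotopy class'' — meaning exactly one isotopy class of curves realising the self-intersection number $k$ — is invariant under $\mcg(\Sigma_{g_i,n_i})$. Hence $\mathbb{P}_{g_i,n_i}$ is the uniform measure on the finite set of $\mcg(\Sigma_{g_i,n_i})$-orbits of homotopy classes of curves with $k$ self-intersections, a set of cardinality $N_{g_i,n_i}(k)$, and it suffices to prove that the number $B_{g_i,n_i}(k)$ of orbits on which the property fails satisfies $B_{g_i,n_i}(k)=o\bigl(N_{g_i,n_i}(k)\bigr)$ as $i\to\infty$.

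To estimate $B_{g_i,n_i}(k)$ I would use the tautological map $\Phi$ sending an $\mcg(\Sigma_{g_i,n_i})$-orbit of isotopy classes of curves with self-intersection number $k$ to the orbit of the underlying free homotopy class. A short minimality argument shows that the homotopy class of a curve realising its self-intersection number $k$ again has self-intersection number $k$, so $\Phi$ surjects onto the set of homotopy orbits counted by $N_{g_i,n_i}(k)$. By construction the orbits contributing to $B_{g_i,n_i}(k)$ are precisely those $o$ with $\card{\Phi^{-1}(o)}\geq 2$, and double counting gives
$$
B_{g_i,n_i}(k)\;\leq\;\sum_{o}\bigl(\card{\Phi^{-1}(o)}-1\bigr)\;\leq\;N^{iso}_{g_i,n_i}(k)-N_{g_i,n_i}(k),
$$
the sum running over the $N_{g_i,n_i}(k)$ homotopy orbits in the codomain of $\Phi$.

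It remains to show $N^{iso}_{g_i,n_i}(k)-N_{g_i,n_i}(k)=o\bigl(N_{g_i,n_i}(k)\bigr)$. Here I would split according to the minimal genus $h$ of a subsurface containing the curve. Since a curve with $k$ self-intersections fills a subsurface of bounded topological complexity, only finitely many values of $h$, bounded in terms of $k$ alone, occur, and the identities $N_{g,n}(k)=\sum_h N_{g,n}(k,h)$ and $N^{iso}_{g,n}(k)=\sum_h N^{iso}_{g,n}(k,h)$ hold with these finitely many terms. For each such $h$ we have $N^{iso}_{g_i,n_i}(k,h)\sim N_{g_i,n_i}(k,h)$, hence $N^{iso}_{g_i,n_i}(k,h)-N_{g_i,n_i}(k,h)=o\bigl(N_{g_i,n_i}(k,h)\bigr)=o\bigl(N_{g_i,n_i}(k)\bigr)$; summing over the bounded range of $h$ yields the claim, and therefore $\mathbb{P}_{g_i,n_i}[\,\cdot\,]\to 1$.

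The genuinely delicate point is not this last bookkeeping but the set-up of $\Phi$, i.e.\ the comparison between isotopy and homotopy of non-simple curves: one must fix the right meaning of ``isotopy class contained in a homotopy class'' so that the relevant counts are finite and match those in the body of the paper, verify that passing to a minimal-position representative changes neither the self-intersection number nor the minimal genus, and confirm that $\Phi$ is indeed surjective. These are essentially the topological facts already underlying the proof that $N^{iso}_{g,n}(k,h)\sim N_{g,n}(k,h)$; once they are in place, the corollary follows from the asymptotic counts with no further work.
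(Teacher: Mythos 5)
Your reduction hinges on the claim that the orbits contributing to $B_{g_i,n_i}(k)$ are \emph{precisely} those homotopy orbits $o$ with $\card{\Phi^{-1}(o)}\geq 2$, and this is where the argument has a genuine gap. The event in the corollary concerns actual isotopy classes inside a single homotopy class, whereas $\Phi^{-1}(o)$ counts $\mcg$-orbits of isotopy classes. If a homotopy class $\alpha$ contains two non-isotopic minimal representatives $a_1,a_2$ that are exchanged by an orientation-preserving homeomorphism of the surface (such a homeomorphism then automatically preserves the free homotopy class, so nothing forbids it), the orbit of $\alpha$ is bad while $\card{\Phi^{-1}(o)}=1$, and your double count $B\leq\sum_o\bigl(\card{\Phi^{-1}(o)}-1\bigr)$ does not see it. The purely numerical relation $N^{iso}_{g_i,n_i}(k)\sim N_{g_i,n_i}(k)$ cannot rule this out: one could in principle have $N^{iso}=N$ exactly while many homotopy classes contain several mutually $\mcg$-equivalent isotopy classes. (Your auxiliary claim that an isotopy-minimal curve is automatically homotopy-minimal is also not justified, but it is harmless for your inequality, since there you only need each homotopy orbit to admit at least one preimage, which the isotopy class of a homotopy-minimal representative provides.)

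The paper's mechanism is topological rather than purely enumerative, and it is what you should substitute for the double-counting step. By Theorem \ref{reidemeisterthm}, two minimal representatives of the same homotopy class differ by a sequence of third Reidemeister moves followed by an ambient isotopy, and a third Reidemeister move requires a disk among the complementary regions; hence a homotopy class whose minimal representative has no disk in its complement contains exactly one isotopy class (this implication is exactly what is invoked in the proof of Proposition \ref{prp_lboundN}). Therefore every bad orbit has a disk in the complement, and by Proposition \ref{prp_lboundN} (equivalently, by the first statement of Corollary \ref{cor_prob2}) the number of such orbits is of strictly lower order than $N_{g_i,n_i}(k,h)$ for each of the finitely many admissible $h$, so the proportion of bad orbits tends to $0$ and the probability in question tends to $1$. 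Your final bookkeeping over the finitely many values of $h$ is fine and can be kept verbatim once the bad orbits are bounded this way.
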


$\mathbb{P}_{g,n}$ here denotes the natural (uniform) probability measure on the finite set of $\mcg(\Sigma_{g,n})$-orbits of curves with $k$ self-intersections. Baer's classical theorem \cite{Baer} says that for simple curves isotopy and homotopy are the same, so Corollary \ref{cor_prob1} can be seen as a probabilistic version of this result.

Our arguments also imply that the asymptotics of $N_{g,n}(\leq k)$ and the similarly defined $N^{iso}_{g,n}(\leq k)$ are dominated by $N_{g,n}(k)$ and $N^{iso}_{g,n}(k)$ respectively as $g+n\to\infty$ and hence that $N^{iso}_{g,n}(\leq k)\sim N_{g,n}(\leq k)\sim N_{g,n}(k)$ as $g+n\to\infty$.

Along the way we also prove the following:
\begin{cor}\label{cor_prob2} Let $k\in\mathbb{N}$. Furthermore, let $\{g_i,n_i\}_{i\in\mathbb{N}}\subset \mathbb{N}$ be a sequence such that $g_i+n_i\to\infty$ as $i\to\infty$. Then as $i\to\infty$
\[ \mathbb{P}_{g_i,n_i}\left[\substack{\displaystyle{\text{A curve with }k\text{ self-intersections has a disk}} \\ \displaystyle{\text{ in the complement}}} \right]\to 0 \]
and
\[ \mathbb{P}_{g_i,n_i}\left[\substack{\displaystyle{\text{A curve with }k\text{ self-intersections separates }\Sigma_g\text{ into}} \\ \displaystyle{k+2\text{ surfaces, all of different signatures}}} \right]\to 1 \]
\end{cor}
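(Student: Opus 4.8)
I would deduce both limits from the correspondence used to prove Theorem~\ref{maintheorem}. Recall its shape: a free homotopy class $\gamma$ with $k$ self-intersections, put in minimal position, has a regular neighbourhood $N(\gamma)$ which is a ribbon graph surface with $k$ four-valent vertices, so $\chi(N(\gamma))=-k$; capping off the disk components of $\Sigma_{g_i,n_i}\setminus N(\gamma)$ yields the minimal subsurface $S(\gamma)\supseteq N(\gamma)$, of some genus $h$, with $b=k+2-2h-d$ boundary circles, where $d$ is the number of disks capped. For $i$ large $S(\gamma)$ is a proper subsurface, and when $d=0$ one has $S(\gamma)=N(\gamma)$ and the $\mcg(\Sigma_{g_i,n_i})$-orbit of $\gamma$ is determined by the ribbon graph $\Gamma\in\cg_h(k)$ together with the topological type of the embedding $S(\gamma)\hookrightarrow\Sigma_{g_i,n_i}$: the partition of the $k+2-2h$ boundary circles into the components of the complement, the signature of each complementary component, and the gluing, subject to the genera and the numbers of punctures of the complementary pieces summing to $g_i-h$ and $n_i$ respectively. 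The only feature of this description I use is that the number of ways to split $g_i$ (resp. $n_i$) among $m$ such slots is a polynomial of degree $m-1$ in $g_i$ (resp. $n_i$) with leading coefficient independent of $i$, and that the leading term of $N_{g_i,n_i}(k,0)$ comes from the case of $m=k+2$ singleton slots.

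The plan is then to bound each ``bad'' subfamily of orbits and show it is $o\big(N_{g_i,n_i}(k)\big)$; since, given only $g_i+n_i\to\infty$, the ratio of the main term to a term whose degree is lower by one in each of $g_i$ and $n_i$ is $\gtrsim g_i+n_i\to\infty$, it suffices to see that each bad family loses at least one degree in $g_i$ \emph{and} at least one in $n_i$. First, by Theorem~\ref{maintheorem} the count $N_{g_i,n_i}(k,h)$ has degree $k+1-2h$ in each of $g_i,n_i$, so $\sum_{h\ge1}N_{g_i,n_i}(k,h)=o\big(N_{g_i,n_i}(k,0)\big)$ and, summing over $h$, $N_{g_i,n_i}(k)\sim N_{g_i,n_i}(k,0)$; thus with probability tending to $1$ the subsurface $S(\gamma)$ is planar with $k+2$ boundary circles. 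Second, if $\gamma$ has a disk in its complement, i.e.\ $d\ge1$, then (using that only finitely many ribbon graphs with $k$ vertices and finitely many choices of capped circles occur, and that a fixed surface carries only finitely many orbits of filling curves with $k$ self-intersections) the number of such orbits is at most a constant times the number of $\mcg$-orbits of embeddings into $\Sigma_{g_i,n_i}$ of a surface with at most $k+1$ boundary circles, which has degree at most $k$ in each of $g_i,n_i$. Hence this family is $o\big(N_{g_i,n_i}(k)\big)$, which is the first assertion.

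For the second assertion I use the first two reductions to assume $S(\gamma)=N(\gamma)$ is planar with $k+2$ boundary circles $c_1,\dots,c_{k+2}$, so $\gamma$ disconnects $\Sigma_{g_i,n_i}$ into exactly as many pieces as there are blocks in the partition of $\{c_1,\dots,c_{k+2}\}$, and into at most $k+2$ pieces. If the partition is not the partition into singletons there are at most $k+1$ pieces and the slot count has degree at most $k$ in each of $g_i,n_i$. If the partition is into singletons but two of the pieces have the same signature, then among the compositions of $g_i$ (resp. of $n_i$) into $k+2$ parts two prescribed parts are forced to be equal, which again lowers the degree by one in $g_i$ and by one in $n_i$. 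Summing the boundedly many such configurations, the bad family is once more $o\big(N_{g_i,n_i}(k)\big)$, so with probability tending to $1$ the curve $\gamma$ separates $\Sigma_{g_i,n_i}$ into $k+2$ pieces of pairwise distinct signature. Since $N^{iso}_{g_i,n_i}(k)\sim N_{g_i,n_i}(k)$, the same conclusions hold verbatim for isotopy classes.

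The main obstacle is not conceptual but bookkeeping: one must have the precise, automorphism-weighted bijection between orbits and the combinatorial data $(\Gamma,\text{partition of boundary circles},\text{signatures of complementary pieces})$ --- which is exactly what underlies Theorem~\ref{maintheorem} --- and one must check uniformly that every degeneracy above (positive genus, a disk or other trivial complementary piece, a coarsening of the partition, or a coincidence of two signatures) removes one unit of freedom from the choice of the complementary genera \emph{and} one unit from the choice of the complementary puncture numbers; only the simultaneity of these two losses makes the comparison work when merely $g_i+n_i\to\infty$. A minor point is the Euler-characteristic bookkeeping for coarser partitions (a complementary component meeting several of the $c_j$) and the observation --- harmless by the above --- that a complementary component may a priori be a once-punctured disk or an annulus.
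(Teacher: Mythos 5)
Your proposal is correct and is essentially the paper's own argument: the paper gives no separate proof, stating that the corollary is established ``along the way,'' i.e.\ it follows from the estimates in Propositions \ref{prp_uboundN} and \ref{prp_lboundN} and Theorem \ref{maintheorem}, where each degenerate configuration (positive genus, a disk in the complement, a coarser partition of the boundary circles, or two complementary pieces of equal signature) is shown to contribute $O\left(((g_i+1)(n_i+1))^{b_0-2}\right)$, losing one degree in each of $g_i$ and $n_i$ exactly as you require. Your degree-loss bookkeeping, including the emphasis on the simultaneous loss in both variables under the hypothesis $g_i+n_i\to\infty$, reproduces this reasoning faithfully.
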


Theorem \ref{maintheorem} also has geometric consequences, for which we need a result by Basmajian from \cite{Basmajian}. He proves that a geodesic with $k$ self-intersections has length bounded below by a function of $k$ and of the hyperbolic structure (see Theorem \ref{thm_Basmajian} in Section \ref{sec_geometry}).

Given $L>0$ and a hyperbolic surface $X$ of signature $(g(X),n(X))$, we will write $N_X(L)$ for the number of $\mcg(\Sigma_{g(X)},n(X))$-orbits of closed geodesics on $X$ that contain a curve of length at most $L$. Note that
$$N_X(L)=\sum_{k\geq 0}N_X(k,L),$$
where the $N_X(k,L)$ are the earlier mentioned counts considered by Sapir in \cite{Sapir}.
From Basmajian's bounds we obtain:
\begin{cor}\label{cor_geometry} Let $L>0$. There exist constants $A=A(L)\in\mathbb{N}$ and $C=C(L)>0$ such that for any hyperbolic surface $X$
$$
N_X(L) \leq C\cdot ((g(X)+1)\cdot (n(X)+1))^A
$$
Furthermore, $A(L)$ can be made explicit (see Section \ref{sec_geometry}).
\end{cor}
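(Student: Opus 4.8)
The plan is to combine Basmajian's length bound (Theorem \ref{thm_Basmajian}) with the polynomial count of Corollary \ref{cor_nogenus}. Start from
$$ N_X(L) = \sum_{k\ge 0} N_X(k,L). $$
Theorem \ref{thm_Basmajian} bounds the length of a closed geodesic from below in terms of its self-intersection number; read contrapositively it produces an explicit integer $K=K(L)$ depending only on $L$ (not on $X$) such that every closed geodesic on $X$ of length at most $L$ has at most $K(L)$ self-intersections. Hence $N_X(k,L)=0$ for $k>K(L)$, and
$$ N_X(L) = \sum_{k=0}^{K(L)} N_X(k,L). $$

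Next I would bound each term topologically: an $\mcg$-orbit of closed geodesics with $k$ self-intersections that contains a curve of length at most $L$ is in particular an $\mcg(\Sigma_{g(X),n(X)})$-orbit of homotopy classes of curves with $k$ self-intersections, so $N_X(k,L)\le N_{g(X),n(X)}(k)$. By Corollary \ref{cor_nogenus}, $N_{g,n}(k)\sim C_k\binom{g+k+1}{k+1}\binom{n+k+1}{k+1}$ as $g+n\to\infty$. Since $g+j\le j(g+1)$ for all $j\ge 1$ and $g\ge 0$, we have $\binom{g+k+1}{k+1}\le (g+1)^{k+1}$ and likewise $\binom{n+k+1}{k+1}\le (n+1)^{k+1}$; combined with the asymptotic this yields a constant $c_k>0$ and a threshold $m_k$, both depending only on $k$, with $N_{g,n}(k)\le c_k\,((g+1)(n+1))^{k+1}$ whenever $g+n\ge m_k$. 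For the finitely many signatures with $g+n<m_k$ the number $N_{g,n}(k)$ is finite (this is exactly the finiteness of the sets on which the measures $\mathbb{P}_{g,n}$ of Corollary \ref{cor_prob1} are defined), so after enlarging $c_k$ we may assume $N_{g,n}(k)\le c_k\,((g+1)(n+1))^{k+1}$ for \emph{all} $(g,n)$.

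Finally, since $(g(X)+1)(n(X)+1)\ge 1$,
$$ N_X(L)\ \le\ \sum_{k=0}^{K(L)} c_k\,((g(X)+1)(n(X)+1))^{k+1}\ \le\ \Big(\sum_{k=0}^{K(L)} c_k\Big)\,((g(X)+1)(n(X)+1))^{K(L)+1}, $$
so the statement holds with $A(L)=K(L)+1\in\mathbb{N}$ and $C(L)=\sum_{k=0}^{K(L)} c_k$. As $K(L)$ is read off explicitly from the inequality in Theorem \ref{thm_Basmajian}, so is $A(L)$.

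The only genuine inputs are Theorem \ref{thm_Basmajian} and Corollary \ref{cor_nogenus}; everything else is bookkeeping. I expect the one place needing care to be the passage from the \emph{asymptotic} equivalence in Corollary \ref{cor_nogenus} to an inequality valid on every surface — that is, absorbing the finitely many low-complexity surfaces into the constant — and, more cosmetically, unwinding the precise form of Basmajian's bound to exhibit $K(L)$, and hence $A(L)$, explicitly. Sharpening $A(L)$ would amount to using the best available lower bound on the length of a geodesic with $k$ self-intersections, but any such bound that tends to infinity with $k$ suffices for the stated qualitative conclusion.
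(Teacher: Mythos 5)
Your proof is correct and follows essentially the same route as the paper: truncate $N_X(L)=\sum_k N_X(k,L)$ at a self-intersection threshold supplied by Theorem \ref{thm_Basmajian}, then bound each $N_{g(X),n(X)}(k)$ polynomially in $(g+1)(n+1)$ via the counts of Corollary \ref{cor_nogenus}. The only (harmless) differences are that the paper first uses a surface-dependent threshold $a_X(L)$ and then takes its supremum $A(L)=\lfloor \tfrac{1}{2}e^{4L}\rfloor+1$, whereas you pass directly to the uniform bound, and you spell out more carefully than the paper the step of upgrading the asymptotic of Corollary \ref{cor_nogenus} to an inequality valid for all signatures by absorbing the finitely many low-complexity cases into the constant.
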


Because our results are based on counting embeddings of ribbon graphs, we believe that our methods generalize to other sets of bounded numbers of disjoint curves with bounded numbers of self-intersections. Also, by carefully going through the arguments below, the case where $k$ is a moderately growing function of $g+n$ can also be handled. 

\subsection*{Acknowledgement}
The first and third author thank the Max Planck Institute for Mathematics in Bonn for its hospitality. The second author acknowledges support from Swiss National Science Foundation grant number P2FRP2\textunderscore 161723.

\section{Classical results and set up}

\subsection{Curves on surfaces}
In this section we recall some classical theorems about curves on surfaces. First we review results about minimal representatives of curves in a given free homotopy class.

Let $\Sigma_{g,n}$ be a oriented surface of signature $(g,n)$. That is, $\Sigma_{g,n}$ is obtained from an oriented closed surface $\Sigma_g$ of genus $g$ by removing $n$ points. A smooth curve $a:S^1\rightarrow \Sigma_{g,n}$ is said to be {\it generic} if its only singularities are transverse double points.  Define the {\it minimal self-intersection number} $m(\alpha)$\footnote{Some authors write $m(\alpha)=i(\alpha,\alpha)$, where $i$ is the geometric intersection number.} of a free homotopy class of curves $\alpha$ on $\Sigma_{g,n}$ to be the minimum number of double points of a generic representative $a\in \alpha$. If $m(\alpha)=0$, $\alpha$ will be called {\it simple}.

For curves that are not in minimal position, we have the following theorem by Hass and Scott:
\begin{theor}[\cite{HassScottIntersections}, Theorem 2]  Let $a$ be a generic curve on $\Sigma_{g,n}$ which has excess self-intersection. Then there is a singular 1-gon or 2-gon on $\Sigma_{g,n}$ bounded by part of the image of $a$.
\end{theor}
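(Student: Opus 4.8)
The plan is to prove the contrapositive: assuming $a$ bounds no singular $1$-gon and no singular $2$-gon on $\Sigma_{g,n}$, I will show that $a$ realizes the minimal self-intersection number $m(\alpha)$ of its free homotopy class $\alpha$, so that $a$ has no excess self-intersection. Fix a complete hyperbolic metric on $\Sigma_{g,n}$ (the handful of surfaces of non-negative Euler characteristic are handled directly, or by the analogous flat-geometry argument), so that the universal cover is $\mathbb{H}^{2}$ with $\pi_1(\Sigma_{g,n})$ acting by isometries. Pick $\alpha\in\pi_1(\Sigma_{g,n})$ in the conjugacy class determined by $a$; if $\alpha$ is trivial or peripheral then $m(\alpha)=0$ and a direct argument applies, so assume $\alpha$ is hyperbolic and let $\tilde a\colon\mathbb{R}\to\mathbb{H}^{2}$ be the $\alpha$-invariant lift of $a$, a proper line whose two endpoints on $\partial\mathbb{H}^{2}$ are the fixed points of $\alpha$. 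The double points of $a$ are in bijection with the $\langle\alpha\rangle$-orbits of the transverse intersections among the translates $\{\gamma\cdot\tilde a:\gamma\in\pi_1(\Sigma_{g,n})\}$, self-intersections of $\tilde a$ counted separately; this is the correspondence I will use throughout.

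First I would show that no lift of $a$ self-intersects and that distinct lifts cross at most once. If $\tilde a(s)=\tilde a(t)$ with $s\neq t$, then applying the Jordan curve theorem to an innermost such loop $\tilde a|_{[s,t]}$ in $\mathbb{H}^{2}$ gives an embedded disc bounded by a subarc of $\tilde a$ with coinciding endpoints; since the deck action is free, $s$ and $t$ are not congruent modulo the period, so the image of this disc under the covering map is a singular $1$-gon on $\Sigma_{g,n}$ bounded by part of the image of $a$, contrary to hypothesis. Similarly, if two distinct translates $\tilde a$ and $\gamma\cdot\tilde a$ meet in two points $P,Q$, then joining the subarc of $\tilde a$ from $P$ to $Q$ to the subarc of $\gamma\cdot\tilde a$ from $Q$ to $P$ and passing to an innermost pair of intersection points produces an embedded bigon in $\mathbb{H}^{2}$; its two sides project to arcs of $a$ (the covering map is invariant under $\gamma$), its corners project to honest double points of $a$ (again by freeness of the deck action), and its image is a singular $2$-gon bounded by part of the image of $a$, contrary to hypothesis. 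Hence every lift of $a$ is embedded and any two distinct lifts meet at most once.

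Now I would conclude. For an arbitrary generic representative $c$ of $\alpha$, the correspondence above shows that $c$ has at least $\ell(\alpha)$ double points, where $\ell(\alpha)$ denotes the number of $\langle\alpha\rangle$-orbits of pairs of distinct translates of a lift of $c$ whose pairs of endpoints are linked on $\partial\mathbb{H}^{2}$: linked arcs in the disc must cross, and whether two translates have linked endpoints depends only on the corresponding group elements, hence only on the conjugacy class of $\alpha$, i.e. only on the free homotopy class. On the other hand, by the previous paragraph the lift $\tilde a$ is embedded and distinct translates of $\tilde a$ cross at most once, so an unlinked pair of translates (which crosses an even number of times) does not cross at all; hence $a$ has exactly $\ell(\alpha)$ double points. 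Therefore $m(\alpha)=\ell(\alpha)$ and $a$ attains it, which is the desired conclusion.

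The main obstacle I expect is the final step, together with the special cases set aside above. Pinning down the bijection between the double points of a representative and $\langle\alpha\rangle$-orbits of crossings of translates with the right multiplicities, and checking that linkedness of endpoint pairs is exactly the invariant governing $m(\alpha)$, requires care. Moreover, when $\alpha=\beta^{k}$ is a proper power the $\langle\alpha\rangle$-invariant lift need not be invariant under $\beta$ and the naive orbit count degenerates, so this case should be reduced to the primitive class $\beta$ and compared with its standard minimal (coiled) representative. By contrast, the two reduction steps above are routine once the covering-space picture is set up, resting only on the Jordan curve theorem and innermost-disc arguments.
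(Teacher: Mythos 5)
The paper does not actually prove this statement --- it is quoted verbatim from Hass--Scott --- so your proposal has to be judged on its own terms, and it has a genuine gap at the step you dismiss as routine. A singular $2$-gon is, by definition, the image of two \emph{disjoint} arcs $I,J$ of $S^1$; when you project the innermost bigon bounded by $\tilde a$ and $\gamma\tilde a$, nothing guarantees that the two sides correspond to parameter intervals of length less than the period of $\tilde a$, nor that these intervals are disjoint modulo the period, so what you obtain need not be a singular $2$-gon at all. In fact your intermediate claim ``no singular $1$- or $2$-gon implies distinct lifts cross at most once'' is false. Take $\beta$ simple and non-peripheral, $\alpha=\beta^2$, and let $a$ be the standard one-crossing representative coiled twice inside an annular neighbourhood of $\beta$. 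One checks directly that $a$ admits no singular $1$-gon or $2$-gon (each candidate loop is freely homotopic to $\beta$ or $\beta^2$), its invariant lift $\tilde a$ is embedded, and yet $\tilde a$ and $\beta\tilde a$ are distinct translates crossing infinitely often: all the lifts of the unique double point are crossings of this single pair. The bigons they bound upstairs have sides whose parameter intervals wrap all the way around $S^1$, which is exactly why no singular $2$-gon appears downstairs. Handling this interleaving of the two strands of a potential bigon along the same curve is precisely the hard content of Hass--Scott's Theorem 2 (as opposed to the two-curve statement), and the innermost-disc argument does not touch it.

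The same example breaks the final counting step: $m(\beta^2)=1$ while your $\ell(\beta^2)=0$, since for simple $\beta$ no two translates of $\tilde a$ have linked endpoint pairs; translates sharing both endpoints are neither linked nor unlinked and yet must cross, so linking alone does not govern $m(\alpha)$ for non-primitive classes. You flag this, but the proposed reduction to the primitive class is not carried out and is not a routine patch --- the crossings coming from the stabilizer of the endpoint pair have to be counted separately (this is the source of formulas of the type $m(\beta^k)=k^2m(\beta)+k-1$). In addition, as literally written ``the number of $\langle\alpha\rangle$-orbits of pairs of distinct translates'' is infinite; the correct bookkeeping is $\pi_1$-orbits of pairs of translates together with their crossing points (equivalently, double cosets $\langle\alpha\rangle\gamma\langle\alpha\rangle$ with $\gamma$ and $\gamma^{-1}$ identified), and one must check that distinct linked orbits yield distinct double points. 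So step 1 is fine, but step 2 and the concluding count both require substantial arguments that the proposal does not contain, and step 2 needs a different statement before it can be proved.
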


By {\it singular 1-gon} we mean the image by $a$ of an arc $I$ of $S^1$, such that $a$ identifies the endpoints of $I$ and $\left.a\right|_I$ is a null-homotopic loop on $\Sigma_{g,n}$. A {\it singular 2-gon} is the image by $a$ of two disjoint arcs $I$ and $J$ of $S^1$ such that $a$ identifies the endpoints of $I$ and $J$ and $\left.a\right|_{I\cup J}$ is a null-homotopic loop on the surface. Note that these singular 1- or 2-gons do not need to be embedded, but just immersed.

Note also that if $a$ has excess self-intersection, then at least one of the surfaces in the complement of the image of $a$ is homeomorphic to a disk. The converse is not true.

A {\it third Reidemeister move} is a local move which corresponds to pushing a branch of a curve across a double points, as depicted in Figure \ref{reidemeister}.
\begin{figure}[H]
\includegraphics{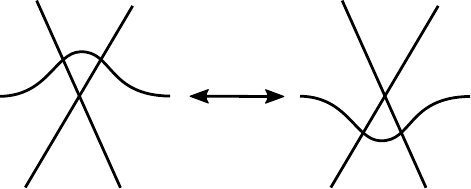}
\caption{A third Reidemeister move}\label{reidemeister}
\end{figure}

If we can perform a third Reidemeister move to a curve $a$, then one of the surfaces in the complement of the image of $a$ is homeomorphic to a disk. Furthermore, if $a$ is in minimal position and $b$ is obtained by $a$ via a third Reidemeister move, then $b$ is in minimal position too. Hass and Scott, and later De Graaf and Schrijver, proved the following:

\begin{theor}[\cite{HassScottShortening}, Theorem 2.1, and \cite{deGraafSchrijver}, Theorem 1]\label{reidemeisterthm}
If $a$ and $b$ are two minimal representatives of the same homotopy class, then there are a sequence of third Reidemeister moves from $a$ to a curve $c$ and an ambient isotopy of $\Sigma_{g,n}$ which induces an isotopy between a regular neighborhood of $c$ and a regular neighborhood of $b$.
\end{theor}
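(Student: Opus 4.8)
The plan is to compare both $a$ and $b$ to a single geometric representative and transfer information through it. Fix a complete, finite-area hyperbolic metric on $\Sigma_{g,n}$, with a cusp at each puncture. If the common free homotopy class $\alpha$ of $a$ and $b$ is trivial or peripheral the statement is immediate, so assume $\alpha$ is essential; then $\alpha$ has a closed geodesic representative $\gamma$. By the classical fact that closed geodesics realise the minimal self-intersection number in their homotopy class (equivalently, two lifts of $\gamma$ to $\mathbb{H}^2$ cross at most once), any small generic perturbation of $\gamma$ has exactly $m(\alpha)$ transverse double points, the value attained by $a$ and $b$. I would first record the (routine) observation that first and second Reidemeister moves change the ambient isotopy class of a regular neighborhood of a generic curve, whereas third Reidemeister moves and ambient isotopies do not.

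Next I would run the Hass--Scott curve-shortening deformation (their piecewise-geodesic ``shortening'' process, or a smooth analogue) starting from $a$; it converges, up to reparametrisation, to $\gamma$. Away from finitely many times the deformation is an ambient isotopy, and at each exceptional time the curve undergoes a single Reidemeister move of the first, second or third kind. Since the deformation does not increase the self-intersection number and $a$ is already minimal, this number stays equal to $m(\alpha)$ throughout. If one can moreover arrange that no exceptional time is an intersection-increasing move, then every exceptional time is a third Reidemeister move, and the convergence to $\gamma$ supplies an ambient isotopy carrying a regular neighborhood of the terminal curve $c_a$ onto a regular neighborhood of a fixed generic perturbation of $\gamma$. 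Running the same deformation from $b$ gives a curve $c_b$ with the analogous properties. As $c_a$ and $c_b$ are both small generic perturbations of $\gamma$, differing only in how the deformation resolves any multiple points of $\gamma$ of order $\geq 3$ --- which does not change the regular neighborhood --- a regular neighborhood of $c_a$ is ambient isotopic to one of $c_b$. Combining: $a$ reaches $c_a$ by third Reidemeister moves (the interspersed isotopies being pushed to the end), and a regular neighborhood of $c_a$ is isotopic to one of $c_b$, which, running the $b$-deformation backwards and using that third Reidemeister moves preserve regular neighborhoods, is isotopic to one of $b$. Taking $c=c_a$ then yields the theorem.

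The main obstacle is precisely the clause ``no exceptional time is an intersection-increasing move'': one must show the shortening deformation of a crossing-minimal curve can be taken to create neither a singular $1$-gon nor a singular $2$-gon. Heuristically, a freshly created small monogon or bigon could be shortened away at once, so an optimally chosen deformation should avoid it; making this rigorous requires a genuine analysis of how such a feature, once present, would have to interact with the rest of the curve before being annihilated, and this is where minimality of $a$ (and $b$) enters decisively. A variant that avoids the analytic control is the combinatorial route of De Graaf--Schrijver: treat curves as closed walks in embedded graphs, allow intersection-increasing moves, and cancel each creating move against its matching annihilating move by an induction on crossing number together with the commutation relations among Reidemeister moves. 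Either way, this cancellation / normal-form step is where the real work lies; the hyperbolic geometry and the bookkeeping with regular neighborhoods are comparatively soft.
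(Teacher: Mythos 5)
This statement is quoted in the paper from Hass--Scott and De Graaf--Schrijver without proof, so there is no internal argument to compare against; judged on its own, your sketch is a plan rather than a proof. The step you yourself isolate at the end --- that the shortening deformation of a crossing-minimal curve can be arranged so that every exceptional time is a third Reidemeister move (no singular $1$-gon or $2$-gon is ever created), or alternatively the De Graaf--Schrijver cancellation of creating moves against annihilating moves --- is not a technical loose end: it is essentially the whole content of the cited theorem. Deferring it means the proposal reduces the statement to the statement. The surrounding geometry (geodesic representative, convergence of the flow, finitely many exceptional times, monotonicity of the self-intersection number) is likewise exactly the machinery of \cite{HassScottShortening}, so nothing here is more elementary or independent of the sources.

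There is also a concrete error: the ``routine observation'' that third Reidemeister moves do not change the ambient isotopy class of a regular neighborhood is false, and it is load-bearing in your final assembly (you use it to pass from a neighborhood of $c_b$ back to a neighborhood of $b$). A third Reidemeister move changes the order in which the curve visits its crossings, hence in general changes the Gauss diagram and the ribbon graph $\Gamma(\cdot)$, and a fortiori the isotopy class of the embedded neighborhood; this is precisely why the paper states that one cannot associate a ribbon graph to a homotopy class and why Lemma \ref{homotsamegraph} carries the no-disk hypothesis. Indeed, if your observation were true, the theorem would collapse to ``regular neighborhoods of any two minimal representatives are ambient isotopic,'' and the carefully asymmetric formulation (Reidemeister moves from $a$ to $c$, then a neighborhood isotopy to $b$) would be pointless. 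The same issue undermines your claim that different resolutions of a multiple point of $\gamma$ of order $\geq 3$ ``do not change the regular neighborhood'': such resolutions differ exactly by third Reidemeister moves. The assembly can be repaired --- keep the conclusion symmetric ($a\to c_a$ and $b\to c_b$ by third moves, with an ambient isotopy carrying the curve $c_b$ to $c_a$), then transport the reversed $b$-side moves through that curve-level isotopy to append them to the $a$-side chain --- but as written the last step fails, and the central cancellation argument is still missing. (You should also say a word about non-primitive classes, where the flow limits onto a multiply covered geodesic and the perturbation argument needs care.)
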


\subsection{Action of the mapping class group on non-simple classes}
The mapping class group of $\Sigma$ is
$$\mcg(\Sigma)=\mbox{Homeo}^+(\Sigma)/_\approx$$
where $\psi\approx\phi$ if they are homotopic. We note that mapping classes in this paper are allowed to permute the punctures of $\Sigma$.

A well known result of Baer \cite{Baer} tells us that simple closed curves are homotopic if and only if they are isotopic (if and only if there is an ambient isotopy of $\Sigma_{g,n}$ sending one to the other). Moreover, we know that two simple closed curves are in the same mapping class group orbit if and only if the surfaces obtained by cutting along them are homeomorphic (see \cite[Section 1.3.1]{FarbMargalit}).

None of these facts hold for nonsimple curves. First, nonsimple curves can be homotopic but not isotopic. (On the other hand Hass and Scott proved in \cite{HassScottConfigurations} that there are only finitely many isotopy classes within a given homotopy class). Second, curves with homeomorphic complements can be in different mapping class group orbits. 

\subsection{Ribbon graphs}\label{gauss&ribbon}
The main tool we will use in our counting arguments later on is {\it ribbon graphs}. Ribbon graphs are graphs together with a {\it (vertex) orientation}. An orientation is a cyclic ordering of the half edges emanating from each vertex of a graph. Here a graph is allowed to have multiple edges and loops\footnote{Some authors prefer the term multigraph for such a graph. We will however not make this distinction.}. We note that writing down a careful definition of ribbon graphs is surprisingly subtle. We will however content ourselves with the description given above and refer to \cite{MulasePenkava} for a rigorous definition.

A ribbon graph can be thickened into a {\it ribbon surface}, that is, an oriented surface with boundary in such a way that the orientation of the surface corresponds to the cyclic orderings of the half edges. In general we will not distinguish between ribbon graphs and ribbon surfaces. We will write $g(\Gamma)$ and $b(\Gamma)$ for the genus and the number of boundary components of $\Gamma$ respectively.

The automorphism group of a ribbon graph $\Gamma$, denoted $\aut{\Gamma}$, is the group of bijective self-maps of $\Gamma$ that preserve the graph structure and the orientation of $\Gamma$. Here, an automorphism is a pair of maps, one that sends vertices to vertices and another one that sends edges to edges. So in particular, an automorphism is allowed to act as the identity on the set of vertices.

Also note that ribbon graph automorphisms extend to orientation preserving homeomorphisms of the corresponding ribbon surface. 

Given a ribbon graph $\Gamma$, we will denote the boundary of the corresponding surface by $\partial\Gamma$. The set of connected components of $\partial\Gamma$ will be denoted $B(\Gamma)=\{\beta_1,\ldots,\beta_{b(\Gamma)}\}$. Note that the restriction of a ribbon graph automorphism to the boundary of the corresponding ribbon surface gives us a map
$$
\aut{\Gamma} \to \sym{B(\Gamma)}
$$
where $\sym{B(\Gamma)}$ denotes the group of permutations of the finite set $B(\Gamma)$, which is isomorphic to the symmetric group $\sym{b(\Gamma)}$ on $b(\Gamma)$ letters. We will denote the image of this map by
$$
\baut{\Gamma} = \mathrm{Im}\left(\aut{\Gamma} \to \sym{B(\Gamma)}\right).
$$

\subsection{Ribbon graphs coming from curves on surfaces}
Our reason to define ribbon graphs is that we can associate them to curves on surfaces. 

Given a generic curve $a$ with self-intersections on $\Sigma_{g,n}$, we can associate a $4$-valent ribbon graph $\Gamma(a)$ to it as follows. The vertices of $\Gamma(a)$ are the self-intersections of $a$ and the edges are the arcs between those self-intersections. The orientation at each vertex comes from the orientation on $\Sigma_{g,n}$. The ribbon surface corresponding to $\Gamma(a)$ is naturally embedded in $\Sigma_{g,n}$ as a regular neighborhood of $a$.

We note however that not all $4$-valent ribbon graphs correspond to a single curve on a surface. We write $\cg(k)$ for the set of isomorphism classes of those that do come from curves with $k$ self-intersections. This set naturally corresponds to the set of so-called Gauss diagams of rank $k$; since we will not directly need to use those in this text we will simply talk about ribbon graphs. For an exposition on Gauss diagrams, see \cite{Turaev}. For enumerative results, see for example \cite{Stoimenow}.

We have the following:
\begin{lemma}\label{isotsamegraph} If $a$ and $b$ are curves on $\Sigma_{g,n}$ in minimal position that can be mapped to each other by ambient isotopies of $\Sigma_{g,n}$ and homeomorphisms of $\Sigma_{g,n}$ then $\Gamma(a)\simeq \Gamma(b)$ as ribbon graphs.
\end{lemma}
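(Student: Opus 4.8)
The plan is to unwind the hypothesis into a single homeomorphism of $\Sigma_{g,n}$ carrying $a$ to $b$, and then to check that such a homeomorphism transports the combinatorial data defining $\Gamma(a)$ onto that defining $\Gamma(b)$; the content of the lemma is simply that the ribbon graph of a generic curve is a homeomorphism invariant of the curve. First I would reduce to the case of a single map: an ambient isotopy of $\Sigma_{g,n}$ is in particular a self-homeomorphism isotopic to the identity, so composing a chain of ambient isotopies and homeomorphisms produces one homeomorphism $\phi\colon\Sigma_{g,n}\to\Sigma_{g,n}$ with $\phi\circ a=b$ up to a reparametrization of $S^1$, which is irrelevant for the associated graph. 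Since mapping classes in this paper are orientation preserving, we may and do take $\phi$ orientation preserving; this will be used at the end. (Geometrically, $\phi$ carries a regular neighborhood of $a$, a copy of the ribbon surface of $\Gamma(a)$, onto a regular neighborhood of $b$.)

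Next I would produce the isomorphism of underlying $4$-valent graphs. The map $\phi$ restricts to a homeomorphism from the image of $a$ onto the image of $b$. Because both curves are generic, a point of such an image is a self-intersection precisely when it has no neighborhood in the image homeomorphic to an interval --- a property preserved by $\phi$ --- so $\phi$ induces a bijection from the self-intersections of $a$, i.e.\ the vertices of $\Gamma(a)$, to those of $b$. Removing the self-intersections, $\phi$ sends the arcs of $a$ between consecutive self-intersections to the arcs of $b$, giving a bijection of edge sets compatible with the incidence relations, and hence an isomorphism of the two graphs.

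Finally I would check that this isomorphism respects the vertex orientations. By construction, the cyclic order of the four half-edges at a self-intersection $v$ of $a$ is the one induced by the orientation of $\Sigma_{g,n}$ near $v$; as $\phi$ is orientation preserving, it carries this cyclic order to the cyclic order at $\phi(v)$ induced by the orientation near $\phi(v)$, which is exactly the vertex orientation of $\Gamma(b)$ at $\phi(v)$. Thus the graph isomorphism induced by $\phi$ is an isomorphism of ribbon graphs, so $\Gamma(a)\simeq\Gamma(b)$.

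I do not expect a substantial obstacle; the delicate points are bookkeeping rather than substance. One must make sure that a composition of ambient isotopies and homeomorphisms really is a single homeomorphism taking $a$ to $b$ --- the statement actually hypothesized --- and not merely one identifying the free homotopy classes of $a$ and $b$. One must also keep track of orientations: an orientation-reversing homeomorphism would only give that $\Gamma(b)$ is isomorphic to $\Gamma(a)$ or to its mirror image, so it is essential that we work with orientation-preserving maps, as in the $\mcg$ setting relevant to the rest of the paper. The hypothesis that $a$ and $b$ are in minimal position is not needed for this particular argument, but it is the situation in which the lemma will be applied.
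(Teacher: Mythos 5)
Your argument is correct and is essentially the paper's own proof, just written out in more detail: the paper simply observes that ambient isotopies and homeomorphisms carry a regular neighborhood of $a$ (the ribbon surface of $\Gamma(a)$) to a regular neighborhood of $b$, whence $\Gamma(a)\simeq\Gamma(b)$, which is exactly the mechanism you spell out vertex-by-vertex and edge-by-edge, including the (implicitly used) orientation-preservation needed for the cyclic orderings. Your side remarks — that the maps should be taken orientation preserving, and that minimal position is not actually used in the argument — are accurate and consistent with the paper's conventions.
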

\begin{proof}
Ambient isotopies and homeomorphisms send regular neighborhoods to regular neighborhoods, so the ribbon surfaces associated to $a$ and $b$ are isotopic and thus $\Gamma(a)\simeq \Gamma(b)$.
\end{proof}
Note that this lemma implies that the ribbon graph of an isotopy class of curves, defined as the ribbon graph of a minimal representative of the class, is well defined and is an invariant of the mapping class group orbit of such a class.

Because of the existence of third Reidemeister moves, one {\it cannot} uniquely associate a ribbon graph to each homotopy class.  We do however have the following weaker statement:

\begin{lemma}\label{homotsamegraph} If $a$ and $b$ are curves on $\Sigma_{g,n}$ that can be mapped to each other by ambient homotopies of $\Sigma_{g,n}$ and homeomorphisms of $\Sigma_{g,n}$ and furthermore these curves contain no disks in their complement then $\Gamma(a)\simeq \Gamma(b)$ as ribbon graphs.
\end{lemma}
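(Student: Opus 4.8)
The plan is to reduce the statement to the isotopy case already settled in Lemma~\ref{isotsamegraph}, exploiting the hypothesis on complementary disks twice: once to force $a$ and $b$ into minimal position, and once to force the Reidemeister machinery of Theorem~\ref{reidemeisterthm} to do nothing.

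First I would dispose of the homeomorphism in the hypothesis. A homeomorphism of $\Sigma_{g,n}$ carries a regular neighborhood of a curve to a regular neighborhood of its image, carries disks in the complement to disks in the complement, and does not change the associated ribbon graph up to isomorphism (exactly as in the proof of Lemma~\ref{isotsamegraph}). Hence we may replace $a$ by its image under the homeomorphism and assume from the outset that $a$ and $b$ are freely homotopic curves on $\Sigma_{g,n}$, both still having no disk in their complement.

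Next, recall from the remark following Theorem~2 of \cite{HassScottIntersections} that a curve with excess self-intersection has a disk in its complement; contrapositively, since neither $a$ nor $b$ has a disk in its complement, both are in minimal position. We may therefore apply Theorem~\ref{reidemeisterthm}: there is a sequence of third Reidemeister moves carrying $a$ to a curve $c$, together with an ambient isotopy of $\Sigma_{g,n}$ taking a regular neighborhood of $c$ to a regular neighborhood of $b$.

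The key point---and the step I expect to be the only real content---is that this Reidemeister sequence must be empty. Indeed, if it had length at least one, its first move would be a third Reidemeister move performed on $a$ itself; but, as recalled just before Theorem~\ref{reidemeisterthm}, a curve to which a third Reidemeister move can be applied has a disk in its complement, contradicting the hypothesis on $a$. Hence $c=a$, so the ambient isotopy of Theorem~\ref{reidemeisterthm} takes a regular neighborhood of $a$ to a regular neighborhood of $b$; the ribbon surfaces associated to $a$ and $b$ are thus ambient isotopic in $\Sigma_{g,n}$, and therefore $\Gamma(a)\simeq\Gamma(b)$ as ribbon graphs, by the same argument as in Lemma~\ref{isotsamegraph}. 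The only points needing a little care are the transport of the ``no disk in the complement'' property through the homeomorphism and through the passage between a curve, its image, and its regular neighborhood; none of these is serious.
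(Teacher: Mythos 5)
Your proof is correct and follows essentially the same route as the paper: the absence of complementary disks rules out third Reidemeister moves, so Theorem \ref{reidemeisterthm} reduces the statement to the isotopy case handled by Lemma \ref{isotsamegraph}. The only difference is that you spell out two points the paper leaves implicit (that the no-disk hypothesis forces minimal position, and that the Reidemeister sequence is empty because its first move would already require a disk), which is a welcome but not substantively different elaboration.
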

\begin{proof}
Since there are no disks in the complement, we cannot perform third Reidemeister moves. This means that by Theorem \ref{reidemeisterthm} the curves can be mapped to each other by ambient isotopies and homeomorphisms of the surface and we can apply Lemma \ref{isotsamegraph}.
\end{proof}

In order to be able to count mapping class group orbits of homotopy classes of curves, we need a converse to this lemma. We will state everything for isotopy classes first.

To an isotopy class of curves $\alpha$ on $\Sigma_{g,n}$ we associate the triple
$$
V(\alpha) = \left(\Gamma(\alpha),P(\alpha),S(\alpha)\right)
$$
where\begin{itemize}
\item $\Gamma(\alpha)$ is the ribbon graph associated to a minimal representative $a$,
\item $P(\alpha)=\{p_1,\ldots,p_r\}$ is a partition of the set of boundary components of $\Gamma(\alpha)$ such that the boundary components of every $p_i$ form the entire boundary of exactly one connected component of $\Sigma_{g,n}\setminus\Gamma(\alpha)$, and
\item $S(\alpha)=((g_1,n_1,b_1),\ldots,(g_r,n_r,b_r))$ records the signatures of the surface attached to the boundary components in $p_i$ for all $i=1,\ldots,r$.
\end{itemize}
We have already noted that $\Gamma(\alpha)$ is indeed an invariant of the isotopy class of $\alpha$ and the same holds for the partition and the signatures of the surfaces, so the triple is well-defined.

We have the following:
\begin{lemma}\label{lem_orbitclass1} Let $\alpha$ and $\beta$ be free isotopy classes of curves. Then $\alpha$ and $\beta$ lie in the same mapping class group orbit if and only if $\Gamma(\alpha)\simeq\Gamma(\beta)$ and the data $(P(\beta),S(\beta))$ can be obtained from $(P(\alpha),S(\alpha))$ by applying the isomorphism between these graphs to this data.
\end{lemma}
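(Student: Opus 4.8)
The plan is to prove both implications directly, realizing mapping-class-group elements as homeomorphisms obtained by gluing a ``model'' homeomorphism between regular neighborhoods of the two curves to homeomorphisms between the complementary subsurfaces. Throughout, fix minimal representatives $a\in\alpha$ and $b\in\beta$, write $R(a)$ and $R(b)$ for the corresponding ribbon surfaces embedded as regular neighborhoods in $\Sigma_{g,n}$, so that $\Gamma(\alpha)=\Gamma(a)$ and $\Gamma(\beta)=\Gamma(b)$.

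For the \emph{only if} direction, suppose $\phi\in\mcg(\Sigma_{g,n})$ sends $\alpha$ to $\beta$. After an ambient isotopy of $\phi$ we may assume $\phi(a)=b$, and then (regular neighborhoods being unique up to isotopy) that $\phi(R(a))=R(b)$. The restriction $\phi|_{R(a)}$ is an orientation-preserving homeomorphism of ribbon surfaces, which, exactly as in the proof of Lemma \ref{isotsamegraph}, induces an isomorphism of ribbon graphs $f\colon\Gamma(\alpha)\to\Gamma(\beta)$. Since $\phi$ also carries $\Sigma_{g,n}\setminus R(a)$ homeomorphically onto $\Sigma_{g,n}\setminus R(b)$, it matches up connected components and preserves their signatures; and the pattern of which boundary components of $R(a)$ (resp.\ $R(b)$) bound the same complementary piece is recorded precisely by $P(\alpha)$ (resp.\ $P(\beta)$). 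Unravelling the definitions, compatibility of $\phi$ with these two decompositions says exactly that $f$ sends $(P(\alpha),S(\alpha))$ to $(P(\beta),S(\beta))$.

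For the \emph{if} direction, let $f\colon\Gamma(\alpha)\to\Gamma(\beta)$ be an isomorphism sending $(P(\alpha),S(\alpha))$ to $(P(\beta),S(\beta))$. As recalled in Section \ref{gauss&ribbon}, $f$ extends to an orientation-preserving homeomorphism $F\colon R(a)\to R(b)$; since at a $4$-valent vertex the two strands of the curve are the two pairs of cyclically opposite half-edges, $F$ respects this strand structure, so after an isotopy we may assume $F(a)=b$. It remains to extend $F$ over the complement. For each part $p_i$ of $P(\alpha)$, let $\Sigma_i$ be the corresponding component of $\Sigma_{g,n}\setminus R(a)$ and $\Sigma_i'$ the component of $\Sigma_{g,n}\setminus R(b)$ glued along $f(p_i)$; by the hypothesis on $S$, $\Sigma_i$ and $\Sigma_i'$ have equal signature. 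The homeomorphism $F$ restricts to a homeomorphism $\partial\Sigma_i\to\partial\Sigma_i'$ that is orientation-preserving for the boundary orientations of $\Sigma_i$ and $\Sigma_i'$ (the sign change coming from passing from the $R$-side to the $\Sigma_i$-side occurs on both source and target, hence cancels). Invoking the change-of-coordinates principle --- an orientation-preserving homeomorphism between the boundaries of two homeomorphic compact oriented surfaces with boundary, with punctures treated as permutable marked points, extends to an orientation-preserving homeomorphism of the surfaces --- we extend $F|_{\partial\Sigma_i}$ to a homeomorphism $\Sigma_i\to\Sigma_i'$. Gluing all these extensions to $F$ yields an orientation-preserving self-homeomorphism $\Phi$ of $\Sigma_{g,n}$ with $\Phi(a)=b$, so $\alpha$ and $\beta$ lie in the same orbit.

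I expect the main obstacle to be the extension step in the \emph{if} direction: one must know that the \emph{prescribed} boundary identification $F|_{\partial\Sigma_i}$ genuinely extends over $\Sigma_i$, which is where the orientation bookkeeping is essential (a boundary homeomorphism extends to an orientation-preserving homeomorphism only when it is compatible with boundary orientations, including the induced permutation of boundary circles) and where one must also arrange that punctures are sent to punctures. The small degenerate cases --- a complementary disk (Alexander trick), a once-punctured disk, or an annulus --- are covered by the same principle and should be dispatched quickly. A secondary point requiring care is the claim that $F$ can be isotoped so that $F(a)=b$ exactly; this rests on the fact that a $4$-valent ribbon graph in $\cg(k)$ determines the curve inside its ribbon surface up to isotopy, via the ``opposite half-edges'' strand convention.
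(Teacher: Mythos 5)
Your proof is correct and follows essentially the same route as the paper: one direction by restricting the homeomorphism to a regular neighborhood (as in Lemma \ref{isotsamegraph}) and checking it transports $(P,S)$, the other by extending the induced ribbon-surface homeomorphism over the complementary components and gluing. The only difference is that you spell out the extension step (boundary-orientation bookkeeping and the change-of-coordinates principle) that the paper leaves implicit, which is a welcome clarification rather than a deviation.
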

\begin{proof}
If $\alpha$ and $\beta$ lie in the same mapping class group orbit, let $a$ and $b$ be minimal representatives and $\phi$ an orientation preserving homeomorphism sending $a$ to $b$. By Lemma \ref{isotsamegraph}, $\Gamma(a)\simeq \Gamma(b)$. Moreover, it is easy to check that $(P(\beta),S(\beta))$ can be obtained from $(P(\alpha),S(\alpha))$ via $\phi$.

Conversely, suppose $a$ and $b$ are minimal representatives of $\alpha$ and $\beta$ respectively and let $g:\Gamma(\alpha)\rightarrow\Gamma(\beta)$ be the isomorphism given by the hypothesis. This induces an orientation preserving homeomorphism $f$ between the ribbon surfaces of $a$ and $b$, sending $a$ to $b$. The fact that the data $(P(\beta),S(\beta))$ can be obtained from $(P(\alpha),S(\alpha))$ by applying $f$ means that $\Gamma(\alpha)$ and $\Gamma(\beta)$ have homeomorphic complementary components, say $S_1,\dots S_k$ and $S_1',\dots S_k'$, with orientation preserving homeomorphisms $f_i:S_i\rightarrow S_i'$. Moreover, the fact that $P(\beta)$ and $P(\alpha)$ correspond via $g$ implies that we can glue $f,f_1\dots f_k$ to get an orientation preserving homeomorphism of $\Sigma_{g,n}$ sending $a$ to $b$. Note that we need to choose the homeomorphisms $f_i$ to be orientation preserving in order to be able to glue them to $f$.
\end{proof}

For homotopy classes we have to add the `no-disk' condition again:
\begin{lemma} Let $\alpha$ and $\beta$ be free homotopy classes of curves that have no disk in their complement. Then $\alpha$ and $\beta$ lie in the same mapping class group orbit if and only if $\Gamma(\alpha)\simeq\Gamma(\beta)$ and the data $(P(\beta),S(\beta))$ can be obtained from $(P(\alpha),S(\alpha))$ by applying the isomorphism between these graphs to this data.
\end{lemma}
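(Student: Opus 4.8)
The plan is to reduce the statement to its isotopy-class analogue, Lemma \ref{lem_orbitclass1}, by using the no-disk hypothesis to turn homotopies into isotopies. The crucial preliminary observation is that, for a free homotopy class $\alpha$ with no disk in its complement, the triple $V(\alpha)=(\Gamma(\alpha),P(\alpha),S(\alpha))$ — a priori defined only for isotopy classes — is in fact well defined, up to the relevant graph isomorphism, when computed from \emph{any} minimal representative of $\alpha$. Indeed, if $a$ and $b$ are two minimal representatives of $\alpha$, then, since no third Reidemeister move can be performed on a curve with no disk in its complement, Theorem \ref{reidemeisterthm} tells us that $a$ and $b$ are related by an ambient isotopy of $\Sigma_{g,n}$ together with a homeomorphism (the curve $c$ of that theorem is $a$ itself, and the ambient isotopy between regular neighborhoods carries $a$ to a spine of a regular neighborhood of $b$, hence to a curve isotopic to $b$). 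Thus the isotopy classes $[a]$ and $[b]$ lie in the same mapping class group orbit, and by Lemma \ref{lem_orbitclass1} their triples $V([a])$ and $V([b])$ agree up to the isomorphism $\Gamma([a])\simeq\Gamma([b])$. So we may unambiguously set $V(\alpha):=V([a])$ for any minimal representative $a$ of $\alpha$.

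With this in hand both directions are short. For the forward direction, suppose $\alpha$ and $\beta$ lie in the same mapping class group orbit and let $\phi$ be an orientation-preserving homeomorphism with $\phi_*\alpha=\beta$. Pick a minimal representative $a$ of $\alpha$; since the minimal self-intersection number is a homeomorphism invariant, $\phi\circ a$ is a minimal representative of $\beta$, and $\Sigma_{g,n}\setminus\phi(a)\cong\Sigma_{g,n}\setminus a$, so $V(\beta)$ may be read off from $\phi\circ a$. As $\phi$ carries the isotopy class $[a]$ to $[\phi\circ a]$, these are in the same mapping class group orbit, so by Lemma \ref{lem_orbitclass1} (or directly by the construction above) $V(\alpha)=V([a])$ and $V(\beta)=V([\phi\circ a])$ correspond via the graph isomorphism. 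Conversely, if $\Gamma(\alpha)\simeq\Gamma(\beta)$ with $(P(\beta),S(\beta))$ obtained from $(P(\alpha),S(\alpha))$ through this isomorphism, then, choosing minimal representatives $a$ of $\alpha$ and $b$ of $\beta$, the triples $V([a])=V(\alpha)$ and $V([b])=V(\beta)$ satisfy the hypotheses of Lemma \ref{lem_orbitclass1}; hence $[a]$ and $[b]$ lie in the same mapping class group orbit, and since isotopic curves are homotopic and homeomorphisms commute with passing to homotopy classes, $\alpha$ and $\beta$ lie in the same mapping class group orbit.

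The only genuine subtlety, and the step I would be most careful about, is the passage in the preliminary observation from the conclusion of Theorem \ref{reidemeisterthm} — an ambient isotopy between the \emph{regular neighborhoods} (ribbon surfaces) of $a$ and $b$ — to an actual ambient isotopy, up to homeomorphism, taking $a$ to $b$ as curves. This relies on the fact that a curve is recovered, up to isotopy inside its regular neighborhood, as a spine of that ribbon surface, so that a homeomorphism of ribbon surfaces matching the ribbon-graph structure of $a$ with that of $b$ can be isotoped to carry $a$ to $b$. This is exactly the mechanism already used in the proofs of Lemma \ref{isotsamegraph} and Lemma \ref{homotsamegraph}, so in practice one simply invokes Lemma \ref{homotsamegraph} for $a$ and $b$ to obtain $\Gamma(a)\simeq\Gamma(b)$, notes that the ambient homeomorphism realizing this also respects the partition and signature data $P$ and $S$, and everything else is bookkeeping on top of the isotopy case.
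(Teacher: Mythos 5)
Your proof is correct and follows the same route as the paper: the no-disk hypothesis rules out third Reidemeister moves, so by Theorem \ref{reidemeisterthm} all minimal representatives of such a homotopy class are isotopic, and the statement reduces to Lemma \ref{lem_orbitclass1}. Your write-up merely makes explicit (the well-definedness of $V(\alpha)$ for homotopy classes and the two directions) what the paper's shorter proof leaves implicit.
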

\begin{proof}
Consider two minimal representatives $a$ and $b$ of $\alpha$ and $\beta$; by Theorem \ref{reidemeisterthm}, they are related by an isotopy and a sequence of Reidemeister moves. But since there is no disk in the complement, there is no Reidemeister move that can be performed, so $a$ and $b$ are actually isotopic and we can apply Lemma \ref{lem_orbitclass1}.
\end{proof}

\section{Counting ribbon graph embeddings}

\subsection{Set up}
Our ultimate goal is to understand the asymptotics of $N_{g,n}(k,h)$, the number of $\mcg(\Sigma_{g,n})$-orbits of free homotopy classes of curves with $k$ self-intersections and minimal genus $h$ on $\Sigma_{g,n}$. Note that the minimal genus of a curve is actually the genus of the ribbon surface associated to it. In this section we will give upper and lower estimates on $N_{g,n}(k,h)$.

Given a ribbon graph $\Gamma\in \cg(k)$, we define the following:
\begin{enumerate}
\item The number $N_{g,n}^{iso}(\Gamma)$ of embeddings of $\Gamma$ into $\Sigma_{g,n}$ up to isotopy.
\item The number $N^{\circ}_{g,n}(\Gamma)$ of embeddings of $\Gamma$ into $\Sigma_{g,n}$ with no disk in the complement up to homotopy
\end{enumerate}
Furthermore, we define the set of ribbon graphs of genus $h$ (that correspond to single curves) with $k$ vertices as
$$
\cg_h(k) = \{\Gamma\in\cg(k) |\; g(\Gamma)=h \}.
$$
For most $h$, the set above is empty. In fact, an Euler characteristic tells us that if $\Gamma$ is ribbon graph corresponding to a curve with $k$ self-intersections, we have $k+1-2g(\Gamma)\geq 0$.

With the notation above, we have
$$\sum_{\Gamma\in \cg_h(k)} N^{\circ}_{g,n}(\Gamma)\leq N_{g,n}(k,h) \leq \sum_{\Gamma\in \cg_h(k)} N_{g,n}^{iso}(\Gamma).$$

Note that $\sum\limits_{\Gamma\in\cg_h(k)} N_{g,n}^{iso}(\Gamma)$ overcounts $N_{g,n}(k,h)$ in two ways:
\begin{itemize}
\item we count isotopy classes instead of homotopy classes of curves, so, because of third Reidemeister moves, multiple isotopy classes might correspond to the same homotopy class;
\item if we glue disks to some component of the ribbon surface associated to $\Gamma$ to obtain $\Sigma_{g,n}$, the curve corresponding to $\Gamma$ may be not in minimal position on $\Sigma_{g,n}$.
\end{itemize}
On the other hand, there are minimal generic curves $a$ such that one or more component of $\Sigma_{g,n}\setminus \Gamma(a)$ is a disk, so in general the first inequality is strict.

\subsection{Estimating $N^{iso}_{g,n}(\Gamma)$ and $N^{\circ}_{g,n}(\Gamma)$}

We will count the total number of distinct (up to homeomorphism and isotopy) ways to embed the ribbon surface corresponding to $\Gamma$ on $\Sigma_{g,n}$. Recall that $N_{g,n}^{iso}(\Gamma)$ counts all such embeddings, including non-minimal ones.  

To shorten notation, we will let $(g_0,b_0)=(g(\Gamma),b(\Gamma))$ denote the signature of the ribbon surface corresponding to $\Gamma$. Furthermore, $\Sigma_{g,b,n}$ will denote the toplogical surface of genus $g$ with $b$ boundary components and $n$ punctures.

\begin{lemma}  Suppose we have an embedding of the ribbon surface $\Gamma$ in $\Sigma_{g,n}$, where
$$\Sigma_{g,n}\setminus \Gamma=\bigsqcup_{i=1}^r \Sigma_{g_i,b_i,n_i}.$$
Then 
$$\sum_{i=1}^r n_i=n ,\;\sum_{i=1}^r b_i=b_0 \text{ and } \sum_{i=1}^r g_i = g+r-g_0-b_0.$$ \label{conditions}
\end{lemma}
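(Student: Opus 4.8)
The plan is to read off all three identities from the elementary topology of the decomposition $\Sigma_{g,n}=\Gamma\cup(\Sigma_{g,n}\setminus\Gamma)$, in which the ribbon surface $\Gamma$ (which is connected, since it is a regular neighbourhood of a single curve) meets its complement along the $b_0$ boundary circles of $\Gamma$.

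First I would dispose of the two ``sum'' identities, which require no computation. The surface $\Gamma$ is compact with boundary and contains none of the punctures of $\Sigma_{g,n}$ (we may isotope the embedding off the punctures, as $\Gamma$ is a regular neighbourhood of a curve), so each of the $n$ punctures lies in exactly one complementary piece $\Sigma_{g_i,b_i,n_i}$; this gives $\sum_{i=1}^r n_i=n$. For the boundary circles, note that $\Gamma$ is embedded in the (boundaryless) surface $\Sigma_{g,n}$, so its frontier consists of $b_0$ disjoint circles, and along each such circle $\Gamma$ lies on one side and the complement on the other. Hence each boundary circle of $\Gamma$ is a boundary circle of exactly one $\Sigma_{g_i,b_i,n_i}$, and every boundary circle of every complementary piece arises in this way; this bijection yields $\sum_{i=1}^r b_i=b_0$.

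The genus identity then follows by additivity of the Euler characteristic under gluing along circles (which have Euler characteristic $0$):
$$\chi(\Sigma_{g,n})=\chi(\Gamma)+\sum_{i=1}^r\chi(\Sigma_{g_i,b_i,n_i}).$$
Substituting $\chi(\Sigma_{g,n})=2-2g-n$, $\chi(\Gamma)=2-2g_0-b_0$, and $\chi(\Sigma_{g_i,b_i,n_i})=2-2g_i-b_i-n_i$, and then using $\sum_i n_i=n$ and $\sum_i b_i=b_0$ to cancel terms, one solves for $\sum_{i=1}^r g_i$ and obtains $g+r-g_0-b_0$.

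There is no real obstacle here; the only point deserving a moment's care is the claim underlying $\sum_i b_i=b_0$, namely that no boundary circle of $\Gamma$ is glued to another boundary circle of $\Gamma$ rather than to a complementary piece. This is automatic, since $\Sigma_{g,n}\setminus\Gamma$ is by definition the complement and each frontier circle, being two-sided in the orientable surface $\Sigma_{g,n}$, bounds $\Gamma$ on one side and a single complementary component on the other. One should also simply note that the complement need not be connected a priori, which is exactly why the index $i$ ranges over the $r$ connected components.
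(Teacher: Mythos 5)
Your proof is correct and follows essentially the same route as the paper: punctures lie only in complementary pieces, each boundary circle of $\Gamma$ is matched with exactly one boundary circle of a complementary component, and the genus identity comes from additivity of the Euler characteristic under gluing along circles. The extra care you take in justifying the boundary bijection is fine but not a different argument.
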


\begin{proof}  The first identity comes from the fact that $\Gamma$ contains no punctures. The second comes from the fact that the boundaries of the $\Sigma_{g_i,b_i,n_i}$ are glued to $\Gamma$.  For the third identity we have
$$\chi(\Sigma_{g,n})=\chi(\Gamma)+\sum_{i=1}^r\chi(\Sigma_{g_i,b_i,n_i}).$$

From this we obtain
$$2-2g-n=2-2g_0-b_0+\sum_{i=1}^r (2-2g_i-b_i-n_i),$$
combining this with the first identity gives
$$g+r-g_0-b_0=\sum_{i=1}^r g_i.$$
\end{proof}

Next we turn to counting the number of ways to embed the ribbon surface $\Gamma$ in $\Sigma_{g,n}$.

\begin{prp}\label{prp_uboundN} Let $\{g_i,n_i\}_{i\in\mathbb{N}}\subset \mathbb{N}$ be a sequence such that $g_i+n_i\to\infty$ as $i\to\infty$. Then
$$
N^{iso}_{g_i,n_i}(\Gamma) = \frac{1}{\card{\baut{\Gamma}}}  {g_i+b_0-g_0-1 \choose b_0-1} {n_i+b_0-1 \choose b_0-1} + O\left(((g_i+1)(n_i+1))^{b_0-2}\right)
$$
as $i\to\infty$.
\end{prp}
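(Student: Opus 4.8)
The plan is to set up a bijection (up to the correct automorphism factor) between isotopy classes of embeddings of the ribbon surface $\Gamma$ in $\Sigma_{g_i,n_i}$ and combinatorial data describing how the complementary pieces are filled in, and then to count that data asymptotically. By Lemma \ref{conditions}, an embedding of $\Gamma$ with complement $\bigsqcup_{j=1}^r \Sigma_{g_j,b_j,n_j}$ is governed by: a partition of the $b_0$ boundary components $B(\Gamma)$ into $r$ blocks (the blocks being the sets of boundary curves of each complementary piece), together with, for each block $j$, a genus $g_j\ge 0$ and a puncture count $n_j\ge 0$, subject to $\sum b_j = b_0$ (automatic), $\sum n_j = n_i$, and $\sum g_j = g_i + r - g_0 - b_0$. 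I would first argue, using classification of surfaces with boundary and the fact that gluing is determined up to isotopy once the topological type of each filling piece and the identification along boundaries are fixed (this is essentially the same gluing argument as in Lemma \ref{lem_orbitclass1}), that two such sets of data give isotopic embeddings if and only if they differ by an element of $\aut{\Gamma}$ acting on $B(\Gamma)$; the kernel of $\aut{\Gamma}\to\sym{B(\Gamma)}$ acts trivially on the embedding data, so the relevant group is exactly $\baut{\Gamma}$.

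Second, I would count the raw (unquotiented) data and show it equals $\card{\baut{\Gamma}}$ times the stated main term, up to the error. Distributing the punctures: the number of ways to write $n_i = n_1 + \cdots + n_{b_0}$ with $n_j \ge 0$, if we temporarily pretend each boundary component is its own block, is $\binom{n_i + b_0 - 1}{b_0 - 1}$; similarly, once we fix the total genus budget $g_i + r - g_0 - b_0$ over $r$ blocks, the weak compositions of the genus contribute a binomial factor. The main point is that the dominant contribution comes from the case $r = b_0$, i.e. every complementary piece is an annulus-type neighborhood of a single boundary component (each block a singleton); in that case $r - b_0 = 0$, the genus constraint reads $\sum_{j=1}^{b_0} g_j = g_i - g_0$, giving $\binom{g_i + b_0 - 1 - g_0}{b_0 - 1}$ many choices, and the puncture constraint gives $\binom{n_i + b_0 - 1}{b_0 - 1}$. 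Multiplying and dividing by $\card{\baut{\Gamma}}$ yields the claimed leading term. Every configuration with $r < b_0$ (some piece absorbing two or more boundary components of $\Gamma$) loses at least one free binomial parameter — merging two boundary blocks drops the exponent in one of the two binomials by one — so such configurations contribute only $O(((g_i+1)(n_i+1))^{b_0-2})$; there are finitely many partition patterns, so the number of patterns is an $i$-independent constant absorbed into the $O$.

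Third, I would verify that all the genuinely topological side conditions are lower-order or harmless: (i) we need $g_j \ge 0$ and $n_j \ge 0$, which only removes boundary terms and does not affect the leading polynomial; (ii) we should not count a complementary piece that is a disk with $0$ punctures and $1$ boundary component as "allowed" only when forming $N^{iso}$ — but disks are permitted in $N^{iso}_{g,n}(\Gamma)$ by definition, so no correction is needed here; (iii) a piece $\Sigma_{g_j,b_j,n_j}$ with $b_j$ boundary components must actually exist as a connected surface, which it does for all $g_j,b_j\ge 1,n_j\ge 0$, so no further exclusions beyond finitely many degenerate small cases (again lower order). Combining, $N^{iso}_{g_i,n_i}(\Gamma) = \frac{1}{\card{\baut{\Gamma}}}\binom{g_i+b_0-g_0-1}{b_0-1}\binom{n_i+b_0-1}{b_0-1} + O(((g_i+1)(n_i+1))^{b_0-2})$.

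The main obstacle I anticipate is the careful justification that the $\baut{\Gamma}$-orbits on the combinatorial data are in exact bijection with isotopy classes of embeddings — i.e. that no extra identifications arise from mapping classes of the filling subsurfaces, and that distinct data in distinct $\baut{\Gamma}$-orbits really do give non-isotopic embeddings of $\Gamma$ (not just of the resulting curve). This requires invoking change-of-coordinates for subsurfaces and the gluing argument of Lemma \ref{lem_orbitclass1} in the setting where we track the embedded ribbon surface rather than the curve; the bookkeeping of which boundary components of which pieces get glued where, and ensuring orientations match, is the delicate part. The asymptotic counting itself is then a routine estimate on weak compositions.
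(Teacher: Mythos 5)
Your overall route is the same as the paper's: stratify embeddings by the set partition of $B(\Gamma)$ recording which boundary components bound a common complementary piece, reduce each stratum to counting weak compositions of the genus and puncture budgets from Lemma \ref{conditions}, observe that the finest partition ($r=b_0$) gives the leading term $\binom{g_i+b_0-g_0-1}{b_0-1}\binom{n_i+b_0-1}{b_0-1}$ while coarser partitions lose a degree of freedom and are $O\left(((g_i+1)(n_i+1))^{b_0-2}\right)$, and finally account for $\baut{\Gamma}$.

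The gap is in that final step. You write that ``multiplying and dividing by $\card{\baut{\Gamma}}$ yields the claimed leading term,'' i.e.\ you treat the number of isotopy classes as the raw signature-data count divided by $\card{\baut{\Gamma}}$. But the action of $\baut{\Gamma}$ on the signature data is not free: a configuration in which a nontrivial boundary permutation in $\baut{\Gamma}$ matches boundary components carrying equal signatures lies in an orbit of size strictly less than $\card{\baut{\Gamma}}$, so orbit count and raw count divided by the group order differ, and the identity you assert is simply false as stated. The discrepancy is harmless only because it is of lower order, and this needs an argument: the paper shows that the number of $r=b_0$ configurations in which at least two complementary pieces have the same signature is $O\left(((g_i+1)(n_i+1))^{b_0-2}\right)$, by a union bound (choose a pair of boundary components of $\Gamma$, assign them a common genus and puncture count, and distribute the rest freely, losing one free parameter in each binomial); on the remaining configurations, where all $b_0$ signatures are distinct, the $\baut{\Gamma}$-action is free and each isotopy class is counted exactly $\card{\baut{\Gamma}}$ times, so the division is legitimate there. (Equivalently, a Burnside-type correction bounded by the fixed-point sets of nontrivial boundary permutations.) Your anticipated obstacle --- that orbits of the data correspond to isotopy classes of embeddings, via the gluing argument of Lemma \ref{lem_orbitclass1} --- is indeed needed, but note the paper only uses it for the finest partition with distinct signatures, explicitly declining to analyze the automorphism action on coarser partitions since those strata are absorbed into the error anyway; your proposal should do the same rather than claim the exact correspondence for arbitrary partitions.
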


\begin{proof} We first assume that the boundary components of $\Gamma$ are distinguishable. By this we mean that $\baut{\Gamma}=\{\mbox{Id}\}$.

Recall that $B=B(\Gamma)=\{\beta_1,\ldots,\beta_{b_0} \}$ denotes the set of boundary components of $\Gamma$. Because of our assumption on $\baut{\Gamma}$, we can write
$$
N_{g_i,n_i}^{iso}(\Gamma) = \sum_{r=1}^{b_0}\sum_{\substack{P \models B \\ \card{P}={r}}} N_{g_i,n_i}^{iso}(\Gamma,P)
$$
where the notation $P\models B$ means that $P=\{p_1,\ldots,p_r\}$ is a set partition of $B$. The number $N_{g_i,n_i}^{iso}(\Gamma,P)$ counts the embeddings of $\Gamma$ into $\Sigma_{g_i,n_i}$ such that the boundary components of in $p_j$ form the entire boundary of a single connected component $S_j$ in $\Sigma_{g_i,n_i}\setminus \Gamma$ for all $j=1,\ldots, n$.

Lemma \ref{lem_orbitclass1} tells us that two such embeddings corresponding to a set partition $P$ are homeomorphic if and only if the signatures of all the $S_i$ are the same (we again use our assumption that $\Gamma$ has no boundary permuting automorphisms here). This means that $N_{g_i,n_i}^{iso}(\Gamma,P)$ is equal to the number of ways to distribute the total genus  $g_i+r-g_0-b_0$ and number of punctures $n_i$ over the (distinguishable) $r$ subsets in $P$. The number of ways to distribute a number $s$ over $r$ boxes is also called the number of {\it weak compositions of $s$ into $r$ parts}. As such, we obtain
$$
N_{g_i,n_i}^{iso}(\Gamma,P) = {g_i+2r-g_0-b_0-1 \choose r-1} {n_i+r-1 \choose r-1}
$$
(see for instance \cite[p.15]{Stanley}). Because $N_{g_i,n_i}^{iso}(\Gamma,P)$ only depends on the number of parts $r$ of the partition, we obtain
$$
N_{g_i,n_i}^{iso}(\Gamma) = \sum_{r=1}^{b_0}S(b_0,r){g_i+2r-g_0-b_0-1\choose r-1 } {n_i+r-1\choose r-1 }
$$
where $S(b_0,r)$ is a Stirling number of the second kind, which counts the number of set partitions of $B$ into $r$ parts (see for instance \cite[p.33]{Stanley}). 

The sum above is a finite sum in our considerations: $\Gamma$ is fixed, hence so is $b_0$. This means that the terms that contribute to the asymptotics are only those of highest order in $g_i+n_i$.

For fixed $r$ we have that
$$
{s+r-1 \choose r-1} \sim \frac{s^{r-1}}{(r-1)!}
$$
as $s\to\infty$. This means that 
$$
N^{iso}_{g_i,n_i}(\Gamma) = S(b_0,b_0) {g_i+b_0-g_0-1 \choose b_0-1} {n_i+b_0-1 \choose b_0-1} + O\left(((g_i+1)(n_i+1))^{b_0-2}\right)
$$
as $i\to\infty$. Because $S(b_0,b_0)=1$, this gives us the result in the case where $\baut{\Gamma}=\{\mbox{Id}\}$.

In the case where $\Gamma$ does have automorphisms that permute boundary components then we over-count. For arbitrary $P\models B$, it is quite hard to work out the influence of the automorphisms. However, we are lucky and the only embeddings that contribute to the asymptotics are those for which the ribbon graph disconnects $\Sigma_{g_i,n_i}$ into $b_0$ surfaces. 

In fact, we will prove that we can also ignore those embeddings in which some of these surfaces have the same signature. Note that in this case this is equivalent to having the same genus and number of punctures, since in these embeddings every connected component of $\Sigma_{g_i,n_i}\setminus \Gamma$ has exactly one boundary component. Once we have this the proof is done, because every embedding in which the genera of all the complementary surfaces are different is counted exactly $\card{\baut{\Gamma}}$ times.

Let us denote by $N_{g_i,n_i}^{iso}(\Gamma,\mathrm{rep})$ the number of gluings corresponding to the set partition $P$ with $b_0$ elements in which at least two of the signatures are equal. We claim that
$$
N_{g_i,n_i}^{iso}(\Gamma,\mathrm{rep}) = O\left(((g_i+1)(n_i+1))^{b_0-2}\right)
$$
as $i\to\infty$. In fact, this follows from a simple union type bound. Indeed, every gluing that contributes to $N_{g_i,n_i}^{iso}(\Gamma,\mathrm{rep})$ can be obtained by choosing a pair of boundary components of $\Gamma$, assigning a single genus and number of punctures to those two and then assigning genera to all the other boundary components. This means that we can bound $N_{g_i,n_i}^{iso}(\Gamma,\mathrm{rep})$ as
$$
N_{g_i,n_i}^{iso}(\Gamma,\mathrm{rep}) \leq {b_0 \choose 2} (g_i+1)\cdot (g_i+1)^{b_0-3}\cdot( n_i+1) \cdot (n_i+1)^{b_0-3}
$$
where the power $b_0-3$ comes from the fact that once the genera (or numbers of punctures) of the first $b_0-3$ boundary components are chosen, the genus (or number of punctures) of the last boundary component is fixed. This proves the claim.
\end{proof}

Towards our lower bound we obtain the following proposition:

\begin{prp}\label{prp_lboundN}  Let $\{g_i,n_i\}_{i\in\mathbb{N}}\subset \mathbb{N}$ be a sequence such that $g_i+n_i\to\infty$ as $i\to\infty$. Then
$$
N_{g_i,n_i}^{iso}(\Gamma)-N_{g_i,n_i}^\circ(\Gamma) = O(((g_i+1)(n_i+1))^{b_0-2})
$$
as $i\to \infty$.
\end{prp}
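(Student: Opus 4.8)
The plan is to identify the difference $N^{iso}_{g_i,n_i}(\Gamma)-N^\circ_{g_i,n_i}(\Gamma)$ with the number of isotopy classes of embeddings of the ribbon surface $\Gamma$ whose complement contains a disk, and then to estimate that number by the bookkeeping from the proof of Proposition~\ref{prp_uboundN}.

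The first step is to remove the homotopy/isotopy distinction on the ``no-disk'' side. To an embedding of the ribbon surface $\Gamma$ in $\Sigma_{g_i,n_i}$ I attach its core curve $a$: a generic curve with $k$ double points and ribbon graph $\Gamma$, for which the embedded ribbon surface serves as a regular neighbourhood. Since a curve with excess self-intersection has a disk in the complement of its image (a consequence of the Hass--Scott theorem recalled in Section~2), an embedding with no disk in its complement has $a$ in minimal position. Now if two such embeddings have freely homotopic core curves $a$ and $b$, then $a$ and $b$ are homotopic minimal curves with no disk in their complements, so by Theorem~\ref{reidemeisterthm} there is a sequence of third Reidemeister moves taking $a$ to some curve $c$, followed by an ambient isotopy of $\Sigma_{g_i,n_i}$ carrying a regular neighbourhood of $c$ onto a regular neighbourhood of $b$. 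But a curve admitting a third Reidemeister move has a disk in its complement; since $a$ does not, the sequence is empty, $c=a$, and the two embeddings are isotopic. Hence $N^\circ_{g_i,n_i}(\Gamma)$ equals the number of \emph{isotopy} classes of embeddings of $\Gamma$ with no disk in the complement, and consequently $N^{iso}_{g_i,n_i}(\Gamma)-N^\circ_{g_i,n_i}(\Gamma)$ is the number of isotopy classes of embeddings of $\Gamma$ into $\Sigma_{g_i,n_i}$ whose complement contains a disk.

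For the estimate I would argue as in Proposition~\ref{prp_uboundN}. A complementary component homeomorphic to a disk (genus $0$, one boundary circle, no punctures) has a single boundary circle, hence corresponds to a singleton part $\{\beta_j\}$ of the partition $P$ of $B(\Gamma)$. Union-bounding over the $b_0$ possible choices of such a $\beta_j$ — a constant, since $\Gamma$ is fixed — it suffices to bound, for each $j$, the number of isotopy classes of embeddings in which $\beta_j$ bounds a disk. By Lemma~\ref{lem_orbitclass1} (and the counting scheme of Proposition~\ref{prp_uboundN}) such an isotopy class is pinned down by a set partition of the remaining $b_0-1$ boundary circles of $\Gamma$, of which there are boundedly many, together with a distribution of the leftover genus and of the $n_i$ punctures over at most $b_0-1$ parts; by Lemma~\ref{conditions} the leftover genus is $O(g_i+1)$. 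Since the number of weak compositions of a quantity that is $O(g_i+1)$ into at most $b_0-1$ parts is $O((g_i+1)^{b_0-2})$, and likewise $O((n_i+1)^{b_0-2})$ for the punctures, each term of the union bound — and hence the whole count — is $O(((g_i+1)(n_i+1))^{b_0-2})$. When $b_0=1$ a boundary circle of $\Gamma$ can bound a disk only if $\Sigma_{g_i,n_i}=\Gamma\cup D^2$, which occurs for at most one pair $(g_i,n_i)$, so the difference is eventually $0$, consistent with the claim.

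The delicate point is the first step: one must be sure that ``no disk in the complement'' genuinely forbids every third Reidemeister move, which is precisely what the results of Hass--Scott and Theorem~\ref{reidemeisterthm} provide, so that the difference $N^{iso}-N^\circ$ is \emph{equal to}, rather than merely bounded in terms of, the disk-containing count. After that the argument runs parallel to Proposition~\ref{prp_uboundN}: the forced singleton part drops the exponent of each of $g_i+1$ and $n_i+1$ by one, and boundary-permuting automorphisms of $\Gamma$ only shrink the count, so they cause no trouble.
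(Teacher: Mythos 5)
Your proof is correct and follows essentially the same route as the paper: you identify $N^{iso}_{g_i,n_i}(\Gamma)-N^\circ_{g_i,n_i}(\Gamma)$ with the number of embeddings whose complement contains a disk, using Theorem \ref{reidemeisterthm} exactly as the paper does, and then bound that count by a union bound over which boundary component bounds the disk, rerunning the count of Proposition \ref{prp_uboundN} with one fewer boundary component to drop the exponent by one. Your additional remarks (minimal position via Hass--Scott, the $b_0=1$ case) only make explicit points the paper leaves implicit.
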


\begin{proof} If we were to use the count in the proof of Proposition \ref{prp_uboundN} for $N_{g_i,n_i}^\circ(\Gamma)$, we would overcount because we did not worry about attaching disks. Note that we use Theorem \ref{reidemeisterthm} here: if there are no (unpunctured) disks in the complement there is only one isotopy class in each homotopy class. In order to obtain a lower bound $N_{g_i,n_i}^\circ(\Gamma)$ we will simply subtract the number of gluings which attach a disk. Let us call this number $N_{g_i,n_i}(\Gamma,\mathrm{Disk})$. So
$$
N_{g_i,n_i}^{iso}(\Gamma)-N_{g_i,n_i}^\circ(\Gamma) = N_{g_i,n_i}(\Gamma,\mathrm{Disk})
$$

We have
$$
N_{g_i,n_i}(\Gamma,\mathrm{Disk}) \leq \sum_{j=1}^{b_0} N_{g_i,n_i}(\Gamma,\mathrm{Disk},j)
$$
where $N_{g_i,n_i}(\Gamma,\mathrm{Disk},j)$ counts the number of gluings in which an unpunctured disk is attached to the $j^{\text{th}}$ boundary component $\beta_j$ (and possibly also to some of the other boundary components). Because we are only after a bound on $N_{g_i,n_i}(\Gamma,\mathrm{Disk},j)$, we will disregard the influence of automorphisms. Using the exact same arguments as in Proposition \ref{prp_uboundN}, we obtain:
\begin{gather*}
N_{g_i,n_i}(\Gamma,\mathrm{Disk},j) \leq \sum_{r=1}^{b_0-1} S(b_0-1,r)  {g_i+2r-g_0-b_0-1\choose r - 1 } {n_i+r-1 \choose r-1} =\\
= O\left(((g_i+1)(n_i+1))^{b_0-2}\right)
\end{gather*}
as $i\to\infty$. 
\end{proof}

\section{The main theorem} \label{sec_asymptotics}

\subsection{Counting orbits}
We are now ready to determine the asymptotics of $N_{g,n}(k,h)$. Before we state our result, we define
$$
C_{k,h}= \sum_{\Gamma \in \cg_h(k)} \frac{1}{\card{\baut{\Gamma}}}
$$
Note that this is a constant in all our considerations. We will write $C_k=C_{k,0}$ for the constant corresponding to planar ribbon graphs.

We have the following result:
\begin{thmrep}{\ref{maintheorem}} Let $k,h\in\mathbb{N}$. Furthermore, let $\{g_i,n_i\}_{i\in\mathbb{N}}\subset \mathbb{N}$ be a sequence such that $g_i+n_i\to\infty$ as $i\to\infty$. Then
$$
 N_{g_i,n_i}(k,h) \sim C_{k,h} {g_i+k-3h+1 \choose k+1-2h} {n_i+k+1-2h \choose k+1-2h} 
$$
as $i\to\infty$.
\end{thmrep}
\begin{proof} We will of course use the bounds from the previous section. Propositions \ref{prp_uboundN} and \ref{prp_lboundN} imply that $N^{\circ}_{g_i,n_i}(\Gamma)\sim N^{iso}_{g_i,n_i}(\Gamma)$ as $i\to\infty$. So we obtain that
$$
 N_{g_i,n_i}(k,h) \sim \sum_{\Gamma \in \cg_h(k)} \frac{1}{\card{\baut{\Gamma}}}  {g_i+b(\Gamma)-h-1 \choose b(\Gamma)-1} {n_i+b(\Gamma)-1 \choose b(\Gamma)-1} 
$$
as $i\to\infty$. A simple Euler characteristic argument yields that for $\Gamma\in\cg_h(k)$ we have
$$
b(\Gamma) =  k+2-2h.
$$
Hence we obtain
$$
 N_{g_i,n_i}(k,h) \sim C_{k,h}  {g_i+k-3h+1 \choose k+1-2h} {n_i+k+1-2h \choose k+1-2h} 
$$
as $i\to\infty$.
\end{proof}

Note that simple curves technically do not fall within our scope, because to construct a ribbon graph, we need self-intersections. However, using the annulus as the single ribbon surface corresponding to a simple curve, all the arguments above work. As a ribbon surface, the annulus has one automorphism, permuting the two boundary components.

As a consequence of our main theorem we also obtain the asymptotics of the number of orbits of all curves with $k$ self-intersections.
\begin{correp}{\ref{cor_nogenus}} Let $k\in\mathbb{N}$. Furthermore, let $\{g_i,n_i\}_{i\in\mathbb{N}}\subset \mathbb{N}$ be a sequence such that $g_i+n_i\to\infty$ as $i\to\infty$. 
$$
 N_{g_i,n_i}(k) \sim C_{k}  {g_i+k+1 \choose k+1} {n_i+k+1 \choose k+1} 
$$
as $i\to\infty$.
\end{correp}

\begin{proof} We have
$$
N_{g_i,n_i}(k) = \sum_{h=0}^{\lfloor{\frac{k+1}{2}}\rfloor} N_{g_i,n_i}(k,h)
$$
Theorem \ref{maintheorem} tells us that asymptotically only the term corresponding to $h=0$ contributes to the sum above, which yields the corollary.
\end{proof}

\subsection{Ribbon Graph Automorphisms and $C_k$}
Now we briefly discuss the constant $C_{k}= \sum\limits_{\Gamma \in \cg_0(k)} \frac{1}{\card{\baut{\Gamma}}}$.  Cantarella, Chapman and Mastin \cite{CantarellaChapmanMastin} recently enumerated planar ribbon curve graphs (knot diagram shadows on the sphere) with 10 or fewer crossings, as well as the mean number of automorphisms of such a shadow.  We are not interested in the total number of automorphisms, but rather automorphisms inducing distinct permutations on the set of boundary components of the ribbon graph.  But of course, if the ribbon graph has no automorphisms, there are no boundary automorphisms either.  Cantarella, Chapman and Mastin's results show that the total number of planar ribbon curve graph automorphisms decreases rapidly and is already $1.03$ for $k=10$, so we expect that as $k\to \infty$, $C_k\sim |\cg_0 (k) | $.  

For small $k$, one can work out the constant $C_{k}$ explicitly. The following table lists the first four values.

\begin{table}[H]
\begin{tabular}{c|c c c c}
$k$& 0& 1 & 2 & 3\\
\hline 
\rule{0pt}{3ex}$C_k$& $\frac{1}{2}$ & $\frac{1}{2}$ & $\frac{1}{2}$ & $3$
\end{tabular}
\caption{The first four values of $C_k$.}
\end{table}

Quantities similar to $|\cg(k)|$ and $|\cg_h(k)|$ for fixed $h$ have been studied by several authors.  One can show (see for example Turaev \cite{Turaev}) that elements of $\cg(k)$ are in bijection with Gauss diagrams with one core circle and $k$ arrows, or in Turaev's language, virtual strings.  Chord diagrams are Gauss diagrams with unoriented arrows.  The asymptotics of the number of chord diagrams with $k$ arrows, with either oriented or unoriented core circles, were studied by Stoimenow \cite{Stoimenow}, giving a lower bound for $|\cg(k)|$, though his results are not filtered by genus. 

\subsection{Probabilistic Statements} Our reasoning also allows us to make certain probabilistic statements. Because the set of $\mcg(\Sigma_{g,n})$-orbits of curves with $k$ self-intersections is finite, it carries a natural probability measure, coming from the counting measure. We will denote this measure by $\mathbb{P}_{g,n}$.

Along the way we have proved the following:
\begin{correp}{\ref{cor_prob2}} Let $k\in\mathbb{N}$. Furthermore, let $\{g_i,n_i\}_{i\in\mathbb{N}}\subset \mathbb{N}$ be a sequence such that $g_i+n_i\to\infty$ as $i\to\infty$. Then as $i\to\infty$
\[ \mathbb{P}_{g_i,n_i}\left[\substack{\displaystyle{\text{A curve with }k\text{ self-intersections has a disk}} \\ \displaystyle{\text{in the complement}}} \right]\to 0 \]
and
\[ \mathbb{P}_{g_i,n_i}\left[\substack{\displaystyle{\text{A curve with }k\text{ self-intersections separates }\Sigma_{g_i,n_i}\text{ into}} \\ \displaystyle{k+2\text{ surfaces, all of different signatures}}} \right]\to 1 \]
\end{correp}

Furthermore, it also follows from our arguments that if we let $N^{iso}_{g,k}$ denote the number of free {\it isotopy} classes of essential closed curves with $k$ self-intersections, then:
$$
N^{iso}_{g,k} \sim N_{g,k}
$$
for $k$ fixed and $g+n\to\infty$. This implies a probablistic version of Baer's theorem:
\begin{correp}{\ref{cor_prob1}}  Let $k\in\mathbb{N}$. Furthermore, let $\{g_i,n_i\}_{i\in\mathbb{N}}\subset \mathbb{N}$ be a sequence such that $g_i+n_i\to\infty$ as $i\to\infty$. Then as $i\to\infty$
$$\mathbb{P}_{g_i,n_i}\left[\substack{\displaystyle{\text{The homotopy class of a curve with }k\text{ self}} \\ \displaystyle{\text{intersections contains exactly one isotopy class}}} \right] \to 1$$
\end{correp}

\section{Geometric consequences} \label{sec_geometry}

In this section we prove Corollary \ref{cor_geometry}. This will be a direct consequence of Theorem \ref{maintheorem} and the following result by Basmajian:

\begin{theor}[\cite{Basmajian}, Theorems 1.1 and 1.2]\label{thm_Basmajian} Let $X$ be a complete hyperbolic structure on $\Sigma_{g,n}$ and $\gamma$ a geodesic on $X$ with $k\geq 1$ self-intersections. The length of $\gamma$ on $X$ satisfies
$$
\max\{c_X\sqrt{k},\frac{1}{4}\log(2k)\} \leq \ell_X(\gamma) 
$$
where $c_X=0$ if $X$ has cusps and $c_X$ is a continuous function on the moduli space of hyperbolic structures on $\Sigma_{g}$, tending to $0$ as $X$ approaches the boundary of this moduli space.
\end{theor}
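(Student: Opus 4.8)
The plan is to prove both inequalities by lifting $\gamma$ to the universal cover $\mathbb{H}^2$ and translating the self-intersection count into a count of translates of the axis that cross the axis. Fix a lift $\tilde\gamma\subset\mathbb{H}^2$, a complete geodesic, and let $T\in\pi_1(X)$ be the hyperbolic isometry with axis $\tilde\gamma$ and translation length $\ell_X(\gamma)=L$. The self-intersections of $\gamma$ are then in bijection with the double cosets $\langle T\rangle g\langle T\rangle$ (with the extra identification $g\sim g^{-1}$) for which $g\notin\langle T\rangle$ and $g\tilde\gamma$ crosses $\tilde\gamma$. Since $\tilde\gamma$ is $T$-invariant, each left $\langle T\rangle$-orbit of crossing translates meets a fixed fundamental segment $\sigma\subset\tilde\gamma$ of length $L$ in exactly one point, so up to a bounded factor the problem reduces to bounding the number $P$ of points of $\sigma$ at which some translate $g\tilde\gamma$ crosses $\tilde\gamma$. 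I would aim to show $k\le P/2$ and, separately, $P\le e^{4L}$.

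For the universal logarithmic bound I would work in the upper half-plane with $\tilde\gamma$ the imaginary axis and $T(z)=e^{L}z$, so that $\sigma=\{\,iy:1\le y\le e^{L}\,\}$. A crossing translate is the axis of the conjugate $h=gTg^{-1}$, a hyperbolic isometry of translation length \emph{exactly} $L$, with boundary fixed points $a<0<b$, and it meets $\sigma$ precisely when $1\le -ab\le e^{2L}$. The crucial point is that the fixed translation length $L$ should force a universal lower bound on how close the boundary endpoints of two distinct crossing translates can be: an isometry of translation length $L$ distorts the boundary circle by a factor at most $e^{L}$, so distinct conjugates of $T$ crossing $\sigma$ should have endpoints separated by a definite amount depending only on $L$. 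Dividing the region of admissible endpoints (whose size is controlled by $-ab\le e^{2L}$) by this minimal separation yields $P\le e^{4L}$, hence $2k\le e^{4L}$ and $\tfrac14\log(2k)\le L$. The decisive feature is that this estimate uses only the translation length, never the discreteness constant of $\pi_1(X)$, which is exactly why the resulting bound is independent of $X$.

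For the bound $c_X\sqrt{k}\le L$ I would instead use the injectivity radius. On a surface without cusps the systole is positive, so I can subdivide $\gamma$ into $N\approx L/\mathrm{inj}(X)$ consecutive geodesic subarcs, each contained in an embedded (hence convex) disk. Two geodesic arcs in such a disk cross at most once, so every self-intersection is accounted for by a pair of distinct subarcs and
$$k\le \binom{N}{2}\le C\left(\frac{L}{\mathrm{inj}(X)}\right)^2,$$
which rearranges to $L\ge c_X\sqrt{k}$ with $c_X$ comparable to $\mathrm{inj}(X)$, i.e. to the systole of $X$. Continuity and properness of $c_X$ on moduli space then follow from continuity of the systole, and $c_X\to0$ as $X$ degenerates because the systole does. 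The quadratic (rather than linear) dependence on $L$ — and hence the square root in $k$ — is forced because self-intersections are created by \emph{pairs} of strands. This also explains why $c_X=0$ in the cusped case: near a cusp (or a very short geodesic) the injectivity radius is not bounded below, a geodesic can wind $m$ times around the cusp picking up of order $m^2$ self-intersections while its length grows only like $\log m$, so no $\sqrt{k}$ bound can hold and only the universal logarithmic estimate survives.

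The \textbf{main obstacle} is the universal constant in the logarithmic bound: one must produce a separation estimate for the endpoints of crossing translates that depends on $L$ alone and not on the geometry of $X$, which means replacing the usual appeal to discreteness of $\pi_1(X)$ by an argument that exploits only the common translation length of the conjugates of $T$. A secondary technical point is the exact bookkeeping converting the crossing-point count $P$ into the self-intersection number $k$ (the factors of two coming from the two strands through each double point and from the symmetry $g\sim g^{-1}$), and, in the $\sqrt{k}$ bound, treating thin collars around short geodesics on the same footing as cusps when tracking the dependence of $c_X$ on the systole.
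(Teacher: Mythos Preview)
The paper does not prove this theorem: it is quoted verbatim from Basmajian's paper \cite{Basmajian} and used as a black box in the proof of Corollary~\ref{cor_geometry}. There is therefore no proof in the paper to compare your proposal against.

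That said, a brief assessment of your sketch on its own terms. Your argument for the $c_X\sqrt{k}$ bound is essentially correct and is indeed the standard one: chop $\gamma$ into roughly $L/\mathrm{inj}(X)$ arcs, each lying in a convex ball, so that any two arcs meet at most once; this gives $k\le\binom{N}{2}$ and hence $L\gtrsim \mathrm{inj}(X)\sqrt{k}$. The identification of $c_X$ with a multiple of the systole, its continuity on moduli space, and its vanishing at the boundary all follow exactly as you say.

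Your approach to the universal logarithmic bound, however, has a genuine gap at precisely the point you flag. The claim that distinct conjugates of $T$ whose axes cross $\sigma$ must have boundary endpoints separated by an amount depending only on $L$ is not something you can extract from the translation length alone: two hyperbolic isometries of the same translation length $L$ can have axes as close as you like (just conjugate one by a small translation), so there is no universal separation without invoking discreteness of $\pi_1(X)$ --- which you explicitly want to avoid. Basmajian's actual argument for the $\tfrac14\log(2k)$ bound does not proceed by separating endpoints on $\partial\mathbb{H}^2$; it uses instead that each self-intersection point sits on an essential geodesic loop of length at most $L$, together with a universal collar/monotonicity estimate that bounds how many such loops can be nested or can pass through a single point on a hyperbolic surface. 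If you want to complete your own proof, that is the missing idea to replace the endpoint-separation step.
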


Given $L>0$ and a hyperbolic surface $X$, define
$$
a_X(L)=\left\lfloor \min \left\{ \left(\frac{L}{c_X}\right)^2,\frac{1}{2}e^{4L} \right\} \right\rfloor +1
$$
Note that for fixed $L$, $a_\cdot(L)$ is a uniformly bounded function on the set of hyperbolic surfaces. Namely, 
$$
A(L)=\sup_X\{a_X(L)\} = \left\lfloor\frac{1}{2}e^{4L}\right\rfloor +1 <\infty
$$
where the supremum is to be taken over all hyperbolic surfaces $X$ of all possible genera.

Recall that $N_X(L)$ denotes the number of $\mcg(\Sigma_{g(X),n(X)})$-orbits of closed geodesics on $X$ that contain a curve of length at most $L$.

We are now ready to prove the following:
\begin{correp}{\ref{cor_geometry}} Let $L>0$. There exists a constant $C=C(L)>0$ such that for any hyperbolic surface $X$
$$
N_X(L) \leq C\cdot ((g(X)+1) (n(X)+1))^{a_X(L)}
$$
\end{correp}

\begin{proof} Theorem \ref{thm_Basmajian} tells us that an $\mcg(\Sigma_{g,n})$-orbit can only contain a curve of length $\leq L$ on $X$ if it's an orbit of curves with at most $a_X(L)-1$ self-intersections. This means that: 
$$
N_X(L) \leq \sum_{k=0}^{a_X(L)-1}N_{g(X),n(X)}(k)
$$
Theorem \ref{maintheorem} tells us that asymptotically this sum is dominated by its last term and that furthermore there exists a constant $C=C(L)$ such that
$$
\sum_{k=0}^{a_X(L)-1}N_{g(X),n(X)}(k) \leq C\cdot (g(X)+1)^{a_X(L)}\cdot (n(X)+1)^{a_X(L)}
$$
where we have used the fact that $a_X(L)\geq 1$.
\end{proof}

Note that the fact that $a_X(L)$ is uniformly bounded for fixed $L$ gives us a polynomial upper bound. On the other hand, if something is known about the hyperbolic structure and $c_X$ can be controlled, then this bound becomes sharper.

\bibliographystyle{plain}
\bibliography{referencescfp}

\end{document}